\documentclass[a4paper, 11pt, english, oneside, reqno]{article}

\usepackage[top=1.25in, bottom=1.25in, left=1.25in, right=1.25in]{geometry}
\usepackage[utf8]{inputenc}
\usepackage{amsfonts, amssymb, amsmath, amsthm, wasysym}
\usepackage{mathrsfs}
\usepackage{graphicx}
\usepackage[dvipsnames]{xcolor}
\definecolor{blue-green}{rgb}{0.0, 0.85, 0.85} % deepskyblue
\definecolor{some-blue}{rgb}{0.12, 0.56, 1.0} %dodgerblue
\definecolor{chocolate}{rgb}{0.48, 0.25, 0.0}

\usepackage[toc,page]{appendix}
\usepackage{caption}

\usepackage{transparent}
\usepackage{eepic}
\usepackage{tocbibind} %la bibliografía aparece en la table of contents

\usepackage{tikz} %Dibujos en LaTeX desde mathcha.io

\usepackage{accents}
\usepackage{setspace}
\usepackage{easyReview}
\usepackage{footnote}
\usepackage{etoolbox}

\usepackage[colorlinks = true]{hyperref}
\hypersetup{
	pdfauthor={Javier Minguillón, Fernando Soria, Ana Vargas},
	pdftitle={On the Pointwise Convergence of Solutions to the Schrödinger Equation Along Certain Highly Tangential Curves},
	pdfkeywords={Schrödinger equation, Schrödinger maximal function, Almost everywhere convergence, 
		Tangential convergence, Fourier analysis},
	pdfcreator={LaTeX with hyperref},
	pdfproducer={pdfLaTeX}
	colorlinks=true,
	linkcolor={blue!80!black},
	citecolor={blue!50!black},
	urlcolor={blue!50!black}
}
\usepackage{cleveref}

\def\R{\mathbb R}
\def\Z{\mathbb Z}
\def\N{\mathbb N}

\def\C{\mathbb C}

\newcommand{\supp}{\text{\normalfont supp}}

\usepackage{accents}

\newcommand{\lessssim}{\lesssim C_\varepsilon R^\varepsilon}
\newcommand{\defeq}{=} 
\newcommand{\len}{\text{length}}

\newcommand{\calpha}{D}
\newcommand{\schtf}[1]{e^{it\Delta}f \left(#1\right)}
\newcommand{\dxt}{dx 
	dt}
\newcommand{\esnalpha}{s}%{s(n,a)}
\newcommand{\tildeq}{Q}
\usepackage{xparse}

\let\latexchi\chi
\makeatletter
\renewcommand\chi{\@ifnextchar_\sub@chi\latexchi}
\newcommand{\sub@chi}[2]{% #1 is _, #2 is the subscript
	\@ifnextchar^{\subsup@chi{#2}}{\latexchi^{}_{#2}}%
}
\newcommand{\subsup@chi}[3]{% #1 is the subscript, #2 is ^, #3 is the superscript
	\latexchi_{#1}^{#3}%
}
\makeatother

\newcommand{\scalarp}[1]{ 
}

\newcommand{\verluegointerno}[1]{{\color{chocolate}
}}

\newcommand{\verluegotres}[1]{{{\color{blue} 
}}}
\newcommand{\verluegocuatro}[1]{{\color{black} 
}}
\newcommand{\verluegocinco}[1]{{\color{blue-green} 
}}
\newcommand{\verluegoalphados}[1]{{\color{some-blue} 
}}

\newtheorem{theorem}{Theorem}
[section]
\newtheorem{lemma}[theorem]{Lemma}

\newtheorem{prop}[theorem]{Proposition}

\theoremstyle{definition}

\newtheorem{remark}[theorem]{Remark}

\usepackage[
backend=biber,
giveninits=true, % CAREFUL this will force initials (inits) instead of full names on authors.
style=numeric, %the standard? There seems to be no standard…
]{biblatex}

\DeclareFieldFormat[article]{author}{\textsc{#1}}
\DeclareFieldFormat[article]{title}{\textit{#1}}
\DeclareFieldFormat[article]{journaltitle}{{#1}}
\DeclareFieldFormat[article]{volume}{{#1}}
\DeclareFieldFormat[article]{number}{{ no. #1}}
\DeclareFieldFormat[article]{pages}{#1}
\DeclareFieldFormat[article]{year}{(#1)}

\DeclareFieldFormat[book]{title}{{#1}}
\DeclareFieldFormat[book]{publisher}{#1}

\DeclareFieldFormat[online]{title}{\textit{#1}}
\DeclareFieldFormat[online]{date}{(#1)} % Works!
\DeclareFieldFormat[online]{url}{\url{#1}}

\DeclareFieldFormat[incollection]{title}{#1}

\AtEveryBibitem{%
	\clearfield{month}%
	\clearfield{day}%
	\clearfield{urldate}%
	\clearfield{doi} % So that DOIs show up as Springer/JGA wants
	\clearfield{eprintclass}
	\clearfield{url}
	\clearfield{pubstate}
	\clearfield{editor}
	\clearfield{isbn}
	\clearfield{issn}
	\clearfield{journaltitle}  % Careful. Comment only when you want shortjournal names.
}

\renewbibmacro*{journal}{%
	\ifboolexpr{
		test {\iffieldundef{shortjournal}}
		and
		test {\iffieldundef{journaltitle}}
	}
	{}
	{\printtext[journaltitle]{%
			\printfield[titlecase]{shortjournal}%
			\setunit{\subtitlepunct}%
			\printfield[titlecase]{journaltitle}}}
}

\renewbibmacro*{url+urldate}{%
	\printfield{url}%
}

\renewbibmacro{in:}{} % This gets rid of 'in' within the phrase 'in proceedings of the ams' or whatever 

\date{}
\title{\sc Empty title}
\author{\sc Javier Minguillón, Fernando Soria, Ana Vargas}

\title{On the Pointwise Convergence of Solutions to the Schrödinger Equation Along Certain Highly Tangential Curves}

\date{\today}
\begin{document}
	\maketitle
	\footnote{\textit{2020 Mathematics Subject Classification.} 35Q41, 42B25, 42B37}
	\footnote{\textit{Keywords.} Schrödinger equation, Schrödinger maximal function, Almost everywhere convergence, Tangential convergence, Fourier analysis}
	\footnote{The authors are partially supported by Grant
		PID2022-142202NB-I00 funded by AEI/10.13039/501100011033.}

\section{Introduction and reductions}\label{section1_introduction and reductions}
\begin{abstract}
We investigate the Sobolev regularity required for almost everywhere convergence to the initial datum of solutions to the linear Schrödinger equation along certain tangential curves. In the regime $\alpha<\tfrac12$, we analyze maximal estimates for expressions of the form $e^{it\Delta}f(x+\gamma(t))$ over specific $\alpha$-Hölder curves $\gamma$ and initial data $f\in H^s(\mathbb{R}^n)$. For the model family $\gamma(t)=(t^{\alpha_1},\ldots,t^{\alpha_n})$, where $\alpha=\min_j \alpha_j$, we show that the critical regularity is $s=\max\left\{\frac{1-2\alpha}{2},\frac{n}{2(n+1)}\right\}.$
\end{abstract}

Consider the linear Schrödinger equation on $ \R^n\times \R $, $ n\geq 1, $ given by
\begin{equation}
	\label{equation_schrodinger with boundary datum}
	\begin{cases}
		i\partial_t u(x,t) - \Delta_x u (x,t) = 0,
		\\
		u(x,0) = f(x).
	\end{cases}
\end{equation}
Its solution can be formally expressed as
\begin{equation}
	\schtf{x} = \int_{\R^n} e^{2\pi i x\cdot \xi} e^{4\pi^2 it|\xi|^2} \widehat f(\xi) d\xi.
\end{equation}

The question of pointwise convergence was first proposed by Carleson in 1980 \cite{Carleson1980AnalyticProblemsRelated}. He asked for the values of $ s>0 $ for which
\begin{equation}\label{equation_classical ae convergence}
	\lim_{t\to 0} \schtf{x} = f(x), \quad \text{a.e.}\quad x\in \R^n,
\end{equation}
holds true for all functions $ f\in H^s(\R^n) $. Carleson \cite{Carleson1980AnalyticProblemsRelated} proved this convergence when $ n=1 $ and $ s\geq \frac14$. Shortly after, Dahlberg and Kenig \cite{DahlbergKenig2006NoteAlmostEverywhere} showed that this condition was necessary, as \eqref{equation_classical ae convergence} fails whenever $ s<\frac14 $.

Over the years, many researchers have contributed to this problem, including Carbery, Cowling, Sjölin, Vega, Moyua, Vargas, Tao, and Bourgain, among others. A major breakthrough came in higher dimensions. In 2016, Bourgain \cite{Bourgain2016NoteSchrodingerMaximala} proved the necessity of the condition $ s\geq \frac{n}{2(n+1)} $. On the sufficiency side, Du, Guth and Li \cite{DuGuthLi2017SharpSchrodingerMaximal} solved the case $ n=2 $ with $ s>\frac13 $ in 2017, and in 2019, Du and Zhang \cite{DuZhang2019Sharp$L^2$Estimates} proved the sharp result for general $ n\geq 3 $ with $ s>\frac{n}{2(n+1)} $.

We consider a variation of this question by approaching the initial datum along a curve. Fix $0<\alpha\leq 1$ and $\calpha\geq1.$ Define the family of curves
\begin{equation}
	\Gamma_{\calpha}^\alpha
	:=
	\left\lbrace
	\gamma:[0,1] \to \mathbb R^n
	: \text{for all } t,t'\in [0,1],\; |\gamma(t)-\gamma(t')| \leq \calpha|t-t'|^\alpha
	\right\rbrace.
\end{equation}
The problem of convergence along such curves was first studied by Cho, Lee, and Vargas \cite{ChoLeeVargas2012ProblemsPointwiseConvergence}. In dimension $ n=1 $, they proved that for $\alpha$-Hölder curves with $\alpha\in (0,1]$, the limit $\lim_{t\to 0} e^{it\Delta}f(x+\gamma(t)) = f(x)$ holds a.e. for $ f\in H^s(\R) $ with $ s>\max\left\lbrace \frac12-\alpha, \frac14 \right\rbrace $, which they also showed to be sharp up to the endpoint. Later, in 2021, Li and Wang \cite{WangLi2026ConvergencePropertiesGeneralized} extended this result to dimension $ n=2 $ for the range $ \frac12\leq \alpha\leq 1 $, proving convergence for $ s > \frac38 $. More recently, in 2023, Cao and Miao \cite{CaoMiao2023SharpPointwiseConvergence} gave a proof for general dimension $ n $, for curves with Hölder index $ \frac12\leq \alpha\leq 1 $, and for $ s>\frac{n}{2(n+1)} $. Another simpler proof of this fact was given by the first author in \cite{Minguillon2024NoteAlmostEverywherea}.

In this paper, we are interested in the more delicate regime $\alpha < \frac12$, where the curves become highly tangential. Specifically, we ask what sufficient conditions on the regularity of $f$ ensure that
\begin{equation}\label{eq_ae convergence along a curve}
	\lim_{t\to 0} \schtf{x+\gamma(t)} = f(x), \quad \text{a.e.}\quad x\in \R^n.
\end{equation}
\begin{remark}
	A change of variables shows that it is enough to consider the case $ \calpha = 1 $. From now on we assume $ \calpha = 1 $ 
	and will denote $\Gamma_1^\alpha$ by $ \Gamma^\alpha.$
\end{remark}
A maximal bound is a sufficient condition to guarantee the above convergence. Let $ B_r^n(x_0) $ denote the ball of radius $ r>0  $ centered at $ x_0\in\R^n $.
Assume that there exists a positive constant $C$ such that, for a $ \gamma\in\Gamma^\alpha $,
\begin{equation}\label{equation_maximal estimate non-reduced}
	\left\|
	\sup _{0<t<1}\left|
	\schtf{x + \gamma(t)}
	\right|
	\right\|_{L^2\left(B^{n}_1(0)\right)}
	\leq
	C
	\|f\|_{
		H^{r}
		\left(\R^n\right)}
\end{equation}
holds for all
$ f $
$ \in H^{r}(\R^n).$ 
Then, \eqref{eq_ae convergence along a curve} holds for every $f\in H^s,$ and all $\esnalpha>r$.

There is a well known estimate.
\begin{remark}
	Estimate \eqref{equation_maximal estimate non-reduced} holds for any $r>\frac n2$.
\end{remark}
We are going to reduce \eqref{equation_maximal estimate non-reduced} to the bound below. We proceed as in \cite{DuZhang2019Sharp$L^2$Estimates}.
%We begin with a definition.
%
%\begin{defin}
%	For fixed $0<\alpha\leq 1$ and $ R>1 $, we define
%	\begin{equation}
%		\GammaR{R^{-1}}
%		:=
%		\left\lbrace
%		\gamma:[0,R^{-1}] \to \mathbb R^n
%		: \text{for all } t,t'\in [0,R^{-1}],\; |\gamma(t)-\gamma(t')| \leq |t-t'|^\alpha
%		\right\rbrace.
%	\end{equation}
%\end{defin}
By Littlewood-Paley decomposition, the time localization lemma (e.g. Lemma 3.1 in S. Lee \cite{Lee2006PointwiseConvergenceSolutions}) and parabolic rescaling, \eqref{equation_maximal estimate non-reduced} can be reduced to the following estimate.
Assume that there exists $\esnalpha>0$ and a constant $C_{\esnalpha}$ satisfying that, for a $ \gamma\in \Gamma^\alpha $,
\begin{equation}
	\left\|
	\sup _{0<t < R}\left|
	e^{i t \Delta} f
	\left(
	x + R\gamma\left(\frac{t}{R^2}\right)
	\right)
	\right|
	\right\|_{L^2\left(B^n_R(0)\right)}
	\leq
	C_{\esnalpha} R^{\esnalpha}
	\|f\|_2
	\label{eq_theorem_Theorem 1.3. from Du Zhang modified}
\end{equation}
holds for all $R \geq 1$ and all $f$ with $\operatorname{supp} \widehat{f} \subset A(1)=\left\{\xi \in \mathbb{R}^n:|\xi| \sim 1\right\}$. Then, \eqref{equation_maximal estimate non-reduced} holds for any $r>\esnalpha$ and all $f\in H^r$.

As shown by counterexamples found in \cite{Bourgain2016NoteSchrodingerMaximala} and \cite{ChoLeeVargas2012ProblemsPointwiseConvergence},
\begin{equation}
	s>\max\left\lbrace {\frac{1-2\alpha}{2}}, {\frac{n}{2(n+1)}}\right\rbrace
\end{equation}
is a necessary condition for \eqref{eq_theorem_Theorem 1.3. from Du Zhang modified} to hold.

We are interested in finding the infimum of the values $\esnalpha$ that satisfy
\eqref{eq_theorem_Theorem 1.3. from Du Zhang modified}.
In the following section, we discuss a theorem that provides the critical value for $\esnalpha$ when we restrict to a certain family of curves.

\section{Main results}
	Let $R>>1$. 
	Let $\alpha_j>0$ for $j=1,...,n$. Denote 
%	$\alpha = \max_j\left\lbrace \alpha_j\right\rbrace.$
	$\alpha = \min_j\left\lbrace \alpha_j\right\rbrace.$
	Further suppose that $\alpha<\frac12$. Define the model curve
	\begin{equation}
		\gamma_0(t) = (t^{\alpha_1},...,t^{\alpha_n}).
	\end{equation}
	for $t\in[0,1]$. Our main result is the following theorem for this curve.
	\begin{theorem}\label{theorem_a.e. convergence for specific curve}
		Let 
		\begin{equation}
			s_0(n,\alpha) = \max\left\lbrace {\frac{1-2\alpha}{2}}, {\frac{n}{2(n+1)}}\right\rbrace =\begin{cases}\frac{1-2\alpha}{2} \text{, if } \alpha< \frac1{2(n+1)},\\ \frac{n}{2(n+1)} \text{, if } \alpha\geq \frac1{2(n+1)}.\end{cases}
		\end{equation}
		Then,
		\begin{equation}\label{equation_theta ae convergence}
			\lim_{t\to 0} \schtf{x + \gamma_0(t)} = f(x), \quad \text{a.e.}\quad x\in \R^n,
		\end{equation}
		for all $f\in H^{s}(\R^n)$ and $s>s_0(n,\alpha)$.
	\end{theorem}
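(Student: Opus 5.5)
By the reduction in Section~\ref{section1_introduction and reductions}, it suffices to prove \eqref{eq_theorem_Theorem 1.3. from Du Zhang modified} for $\gamma=\gamma_0$ with every exponent $\esnalpha>s_0(n,\alpha)$, for $\operatorname{supp}\widehat f\subset A(1)$. The rescaled curve is
\begin{equation*}
\Gamma_R(t):=R\gamma_0(t/R^2)=\bigl(R^{1-2\alpha_1}t^{\alpha_1},\dots,R^{1-2\alpha_n}t^{\alpha_n}\bigr),\qquad 0<t<R.
\end{equation*}
Its speed is $|\Gamma_R'(t)|\sim\sum_j R^{1-2\alpha_j}t^{\alpha_j-1}$, which is decreasing in $t$ because every $\alpha_j<1$. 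I will decompose $(0,R)$ dyadically into $I_T=[T,2T]$ and set $v_T:=\sup_{I_T}|\Gamma_R'|$. The basic dichotomy is between times where the curve moves faster than the unit-frequency wave packets ($v_T\gtrsim 1$) and times where it moves slower. The supremum over $(0,R)$ is bounded by the $\ell^2$ sum of the suprema over the $O(\log R)$ pieces, so the logarithm is absorbed into $R^\varepsilon$.

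\textbf{Slow regime} ($v_T\lesssim1$, i.e.\ $T\gtrsim t_*$, where $t_*$ is defined by $v_{t_*}\sim1$). On $I_T$ the curve $\Gamma_R$ is Lipschitz with constant $\lesssim1$. Undoing the parabolic rescaling, this is exactly the setting of the $\alpha\ge\frac12$ (Lipschitz) results of Cao--Miao \cite{CaoMiao2023SharpPointwiseConvergence} and \cite{Minguillon2024NoteAlmostEverywhere}, which are proved through the Du--Zhang \cite{DuZhang2019Sharp$L^2$Estimates} fractal $L^2$ restriction estimate. Those results hold uniformly for curves with unit-size Lipschitz constant. Applied on each such piece, they give the bound $C_\varepsilon R^{\frac{n}{2(n+1)}+\varepsilon}\|f\|_2$.

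\textbf{Fast regime} ($T\lesssim t_*$). Here the solution is locally constant at unit scale in $x$ and $t$, while the curve sweeps out a set of length $L_T\sim v_T T\sim\sum_j R^{1-2\alpha_j}T^{\alpha_j}$. I will cover $\Gamma_R(I_T)$ by $\sim L_T+T$ unit space-time cells $(y_k,t_k)$ and dominate the supremum over $I_T$ by a square function over these cells,
\begin{equation*}
\sup_{t\in I_T}\bigl|e^{it\Delta}f(x+\Gamma_R(t))\bigr|\lesssim\Bigl(\sum_k\bigl|e^{it_k\Delta}f(x+y_k)\bigr|^2\Bigr)^{1/2}
\end{equation*}
(up to Schwartz tails). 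Unitarity then gives the trivial bound $(L_T+T)^{1/2}\|f\|_2$. For $T\lesssim1$ we have $L_T\lesssim R^{1-2\alpha}$, which produces exactly the $R^{\frac{1-2\alpha}{2}}$ term. This is the mechanism behind the Cho--Lee--Vargas exponent.

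\textbf{Main obstacle: the intermediate range $1\le T\le t_*$.} Here the trivial count gives $L_T^{1/2}\gg R^{\frac{1-2\alpha}2}$, so it loses too much: summing up to $t_*$ yields only $R^{\frac{1-2\alpha}{2(1-\alpha)}}$, which is not sharp. My plan for this range is to exploit the specific geometry of $\gamma_0$. Fix $T$ and rescale time by $T$ and space by $T^{1/2}$, so that the piece becomes a frequency-$T^{1/2}$ problem on a unit time interval along the curve $s\mapsto \Gamma_R(Ts)$. This rescaled curve is smooth with nonvanishing curvature and has length $L_T$. A fast-moving, curved trajectory meets each wave packet tube only in an $O(1)$-length set. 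I would therefore run the Du--Zhang broad/narrow induction on scales with the maximal operator replaced by the restriction of $|e^{it\Delta}f|$ to the moving points $x+\Gamma_R(t)$. In the broad part, transversality of the curve to the tubes is what should prevent the $L_T$ loss. In the narrow part, I would use parabolic rescaling, under which the family $\{\gamma_0\}$ is mapped to a curve of the same type with modified coefficients, so the induction closes. The expected outcome is $C_\varepsilon R^{\varepsilon}\max\{R^{\frac{1-2\alpha}2},R^{\frac n{2(n+1)}}\}\|f\|_2$ on each intermediate piece. Verifying that the transversality gain exactly compensates $L_T/R^{1-2\alpha}$ is the step I expect to be hardest.

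If it works, combining the three regimes gives \eqref{eq_theorem_Theorem 1.3. from Du Zhang modified} for all $\esnalpha>s_0(n,\alpha)$, and Theorem~\ref{theorem_a.e. convergence for specific curve} follows from the reduction.
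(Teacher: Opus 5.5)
\textbf{Gap: the intermediate range is not proved.} Your proposal does not prove the theorem. The range $1\le T\le t_*\sim R^{\frac{1-2\alpha}{1-\alpha}}$ is where the content of the theorem lies. There you only give a plan: re-run the Du--Zhang broad/narrow induction for the moving-point operator, with a transversality gain in the broad part and a rescaling that closes the induction in the narrow part. You then make the conclusion conditional on that plan. None of these steps is carried out, and one is inaccurate as stated. Rescaling about a cap of radius $K^{-1}$ centred at $\xi_0$ turns $\theta$ into $s\mapsto K^{-1}\theta(K^2s)+4\pi K s\,\xi_0$. This adds a linear drift, so the family $\{\gamma_0\}$ is not preserved. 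Without this range your pieces only give $R^{\frac{1-2\alpha}{2(1-\alpha)}}$, as you note yourself.

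\textbf{The missing idea.} No curvature or transversality is needed, and the induction need not be reopened. Theorem~\ref{theorem_Cor1.7 of Du Zhang} holds for an \emph{arbitrary} union of unit lattice cubes, so the curve enters only through a counting bound. The paper proceeds as follows.
\begin{enumerate}
\item Linearize the supremum over $1<t<R$. For each $j\in\Z^n\cap B_R(0)$, pick $t_j$ almost attaining it, and let $Q_j$ be the unit cube containing $(j+\theta(t_j),t_j)$. By Proposition~\ref{proposition_locally constant with series}, the squared $L^2(B_R(0))$ norm of the maximal function is bounded by a rapidly decaying weighted sum over $\mathfrak k,\mathfrak l\in\Z^n$ of $\sum_j\int_{Q_j}|e^{is\Delta}f_{\mathfrak k,\mathfrak l}|^2$.
\item Group the cubes by dyadic time $\lambda$ and by multiplicity $\eta$, the number of $j$ sharing the same cube. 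Necessarily $\eta\lesssim R^{1-2\alpha}\lambda^{\alpha-1}$.
\item Count: a ball $B^{n+1}_r(y,\tau)$ contains $\lesssim\eta^{-1}\bigl|B^n_{r+1}(y)-\theta([\tau-r,\tau+r])\bigr|\lesssim\eta^{-1}r^n\max\{1,R^{1-2\alpha}\lambda^{\alpha-1}\}$ cubes of $X_{\lambda,\eta}$. That is, $\phi\lesssim\eta^{-1}\max\{1,R^{1-2\alpha}\lambda^{\alpha-1}\}$ with $\beta=n$.
\item Apply Du--Zhang at scale $\lambda$ on balls of radius $\lambda$, and sum these with Theorem~\ref{theorem_ball-to-band}. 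Using $\eta\lesssim R^{1-2\alpha}\lambda^{\alpha-1}$, one gets for $\lambda\le t_*$
\begin{equation*}
\eta^{\frac12}\|e^{it\Delta}f\|_{L^2(X_{\lambda,\eta})}\le C_\varepsilon R^{\varepsilon}\,\eta^{\frac12-\frac1{n+1}}\bigl(R^{1-2\alpha}\lambda^{\alpha-1}\bigr)^{\frac1{n+1}}\lambda^{\frac n{2(n+1)}}\|f\|_2\le C_\varepsilon R^{\varepsilon}\,R^{\frac{1-2\alpha}2}\lambda^{\frac\alpha2-\frac1{2(n+1)}}\|f\|_2 .
\end{equation*}
\end{enumerate}
Compare this with your trivial count $R^{\frac{1-2\alpha}2}\lambda^{\frac\alpha2}$. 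The fractal estimate replaces the time factor $\lambda^{\frac12}$ by $\lambda^{\frac n{2(n+1)}}$, while the speed still costs at most its square root. Splitting into $\alpha<\frac1{n+1}$ and $\alpha\ge\frac1{n+1}$, and using $\lambda\le R^{\frac{1-2\alpha}{1-\alpha}}$, gives $R^{s_0(n,\alpha)+\varepsilon}$.

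\textbf{Two smaller points.}
\begin{itemize}
\item Your slow-regime bound is correct, but not because of the citation. After rescaling, the $\alpha\ge\frac12$ theorems concern curves with $|\theta(t)-\theta(t')|\le|t-t'|^{1/2}$. Your piece on $[T,2T]$ moves distance $\sim R^{1-2\alpha}T^{\alpha}\gg T^{1/2}$. The bound instead follows from the same counting as above, now with $\phi\lesssim1$.
\item Treat $(0,1)$ as one block covered by $\sim R^{1-2\alpha}$ unit cells, as the paper does, rather than dyadically. Dyadic pieces in $(0,1)$ are infinitely many and each costs at least one cell.
\end{itemize}
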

	Denote $\theta(t) \defeq \theta_R(t) \defeq R\gamma_0\left(\frac{t}{R^2}\right)$ for $0\leq t\leq R$. That is, consider
	\begin{equation}\label{eq_definition of theta}
		\theta(t) 
		= 
		\left(
			R^{1-2\alpha_1}t^{\alpha_1}, ... , R^{1-2\alpha_n}t^{\alpha_n}
		\right).
	\end{equation}
	
	By the considerations in Section \ref{section1_introduction and reductions}, the following theorem implies the above \Cref{theorem_a.e. convergence for specific curve}.

	\begin{theorem}\label{theorem_maximal bound that implies ae convergence}
		Let $n\geq 1$ and $0<\alpha<\frac12$. For any $\varepsilon>0$, there exists a positive constant $C_\varepsilon$ such that the following holds. For all  $f:\R^n\to\C$ such that 
		%   $\supp\left(\widehat f\right)$, 
		$\supp\widehat f\subset B^n_1(0)$,
		%	$\suppp{\widehat f}$,
		and all $R\geq 1$,
		\begin{align}
			\left\|
			\sup_{0<t<R} 			
			\left|\schtf{x+\theta(t)}\right|
			\right\|
			_{L^2\left(B_R(0),dx\right)} 
			&\leq
			C_\varepsilon R^{s_0(n,\alpha)+\varepsilon} \|f\|_2,
			\label{eq_izquierda - constante peor posible v2}
		\end{align}
		where $s_0(n,\alpha) = \max\left\lbrace {\frac{1-2\alpha}{2}}, {\frac{n}{2(n+1)}}\right\rbrace 
		$. 	
	\end{theorem}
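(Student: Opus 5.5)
The plan is to split the time interval $[0,R]$ into a short initial piece and a long remaining piece. On the remaining piece the curve $\theta$ is nearly Lipschitz at the natural scale. On the short piece the curve moves fast, but the time interval is so small that the solution barely evolves. Concretely, fix the threshold $t_0 = R^{-\beta}$ for a parameter $\beta\ge 0$ to be chosen, and bound the supremum over $0<t<t_0$ and over $t_0\le t<R$ separately.

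\emph{Short times.} On $[0,t_0]$, the curve $\theta$ is contained in a box of size $R^{1-2\alpha_j}t_0^{\alpha_j}$ in the $j$-th coordinate, hence in a ball of radius $\lesssim R^{1-2\alpha}t_0^{\alpha}$ (using $\alpha=\min_j\alpha_j$ and $R,t\ge$ appropriate ranges; for $t_0\le 1$ the worst coordinate is the one with smallest exponent).
\begin{itemize}
\item Since $\widehat f$ is supported in the unit ball, $e^{it\Delta}f$ is essentially constant at scale $1$. It is a smooth function of $t$ with $|\partial_t e^{it\Delta}f|\lesssim$ a frequency-localized version of $f$.
\item Consequently $\sup_{0<t<t_0}|e^{it\Delta}f(x+\theta(t))|$ is controlled by a maximal average of $|f|$ (or of finitely many Schwartz-tail modifications $|f|*\phi$) over the ball $B(x, C R^{1-2\alpha}t_0^{\alpha})$.
\item Covering this ball by unit balls, the crude estimate gives $L^2(B_R)$ norm $\lesssim (R^{1-2\alpha}t_0^{\alpha})^{n/2}\|f\|_2$. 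This loses too much in higher dimensions.
\end{itemize}
The better route is a one-dimensional-type argument. Localize $f$ in frequency to the unit ball. Use that the map $x\mapsto \sup_{|y|\le \rho}|g(x+y)|$ has $L^2$ norm $\lesssim \rho^{n/2}\|g\|_2$ only in the worst case, and exploit the curve structure instead. Reparametrize by $u=R^{1-2\alpha}t^{\alpha}$. The segment of the curve where $u\in[2^k,2^{k+1}]$ is traversed during a time interval of length $\sim (2^kR^{2\alpha-1})^{1/\alpha}$.

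I would choose $t_0$ so that this time length is $\lesssim 1$ on all the short-time dyadic pieces. There the curve is, up to $O(1)$ time errors irrelevant at frequency $1$, a translate of $f$ along a curve of length $\sim 2^k$. One then bounds it by the Kolmogorov--Seliverstov / $TT^*$ argument for the maximal function of translates along a curve with $\widehat f$ supported in $B_1$, which gives length$^{1/2}$. Summing the dyadic pieces and optimizing yields $R^{(1-2\alpha)/2}$, which is the first term in $s_0$. This is the source of the $\frac{1-2\alpha}{2}$ exponent, matching the Cho--Lee--Vargas counterexample.

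\emph{Long times.} For $t\ge t_0$, i.e.\ away from the origin, $\theta$ satisfies $|\theta'(t)|\lesssim R^{1-2\alpha_j}t^{\alpha_j-1}$, which is decreasing in $t$.
\begin{itemize}
\item I would decompose $[t_0,R]$ dyadically, $t\sim 2^m$. On each block the curve is Lipschitz with constant $L_m$ and has controlled length.
\item The transformation $x\mapsto x+\theta(t)$ combined with a Galilean-type change (modulating $f$ by a linear phase and shifting) converts the curve on each time block of length $\sim 2^m$ into one with $\alpha\ge 1/2$ behavior at the rescaled level.
\item To such a curve I would apply the known sharp estimate for $\frac12\le\alpha\le1$ (Cao--Miao, or the first author's simpler proof via Du--Zhang), giving $R^{\frac{n}{2(n+1)}+\varepsilon}$ per block, with only $\log R$ blocks.
\end{itemize}
The hardest point is verifying that on each block the effective Hölder-$\frac12$ constant is $O(1)$ after parabolic rescaling. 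I expect this to follow because the required Hölder-$1/2$ condition at scale $R$ is $|\theta(t)-\theta(t')|\lesssim |t-t'|^{1/2}$ for $|t-t'|\ge1$. For $t\ge t_0$ this is guaranteed by $\alpha_j<1/2$ once $t_0$ is chosen as the point where $R^{1-2\alpha_j}t^{\alpha_j-1/2}\sim1$.

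At that choice of $t_0$, the short-time piece has curve length $\sim R^{1-2\alpha}t_0^{\alpha}$, which is where the balance must be checked. If it does not match $R^{(1-2\alpha)/2}$ directly, I would insert an intermediate range where a local-constancy plus Strichartz/$TT^*$ bound interpolates between the two regimes. Combining both pieces gives the maximum of the two exponents.
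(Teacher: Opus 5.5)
Your short-time argument is sound and is essentially the paper's treatment of $0<t<1$. Local constancy (Proposition \ref{proposition_locally constant with series}) reduces the supremum to a maximum over the $\sim R^{1-2\alpha}$ unit balls covering $\theta([0,1])$, and bounding that maximum by a sum gives $R^{\frac{1-2\alpha}{2}}$.

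The gap is in the long-time piece, in three places.

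\emph{The threshold is inconsistent.} $R^{1-2\alpha}t^{\alpha-\frac12}\sim 1$ gives $t_0\sim R^2$, which lies outside $[0,R]$. Your short-time argument needs $t_0\lesssim 1$.

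\emph{With $t_0\le 1$, the long-time claim is false for $\alpha<\frac{1}{2(n+1)}$.} You claim $[t_0,R]$ contributes only $R^{\frac{n}{2(n+1)}+\varepsilon}$. But on $1<t<2$ the solution is essentially frozen while $\theta$ travels a distance $\sim R^{1-2\alpha}$. So a fixed $f$ with $0\le \widehat f$ supported in a small ball around the origin gives $\|\sup_{1<t<2}|e^{it\Delta}f(x+\theta(t))|\|_{L^2(B_R)}\gtrsim R^{\frac{1-2\alpha}{2}}\|f\|_2$, which exceeds $R^{\frac{n}{2(n+1)}}$.

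\emph{Cao--Miao cannot be used as a black box on any dyadic block $[\lambda,2\lambda]\subset[1,R]$.} There $|\theta(2\lambda)-\theta(\lambda)|\sim R^{1-2\alpha}\lambda^{\alpha}\gg\lambda^{1/2}$, so $\theta$ is not H\"older-$\frac12$ with bounded constant at scale $\lambda$. A Galilean transformation only subtracts a linear function, which leaves a nonlinear remainder of the same size $R^{1-2\alpha}\lambda^{\alpha}$ on the block. This holds even for $\lambda\ge R^{\frac{1-2\alpha}{1-\alpha}}$, where $\theta$ is $1$-Lipschitz at unit scale but still not H\"older-$\frac12$. There too you need an actual argument, not the Cao--Miao theorem itself.

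The missing idea concerns the range $1\le\lambda\le R^{\frac{1-2\alpha}{1-\alpha}}$, where $|\theta'|\sim R^{1-2\alpha}\lambda^{\alpha-1}\ge 1$. Here the curve is neither frozen in time nor slow, and this is where both terms of $s_0$ compete; they do not come from two separate regimes. Local constancy alone gives only $R^{\frac{1-2\alpha}{2}}\lambda^{\frac{\alpha}{2}}$ on such a block. Near $\lambda=R^{\frac{1-2\alpha}{1-\alpha}}$ this exceeds $R^{s_0(n,\alpha)}$ whenever $\alpha<\frac{1}{n+2}$. So your deferred ``local constancy plus Strichartz/$TT^*$'' step is exactly where the proof must happen, and it is not supplied.

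The paper proceeds as follows.
\begin{enumerate}
\item It linearizes the supremum by choosing $t_j$ for each lattice point $j$ and collecting the unit cubes $Q_j\ni(j+\theta(t_j),t_j)$.
\item It sorts these cubes by time scale $\lambda$ and by multiplicity $\eta=\#\{j:Q_j=Q\}$. The multiplicity is needed because a fast curve sends many $j$ to the same cube.
\item It applies the Du--Zhang fractal $L^2$ estimate (Theorem \ref{theorem_Cor1.7 of Du Zhang}) at scale $\lambda$ with $\beta=n$.
\item It uses the density bound $\phi\lesssim\eta^{-1}\max\{1,R^{1-2\alpha}\lambda^{\alpha-1}\}$ of Lemma \ref{lemma_key bound of the phi density}, obtained by counting lattice points in $B_{r+1}(y)-\theta([\tau-r,\tau+r])$.
\end{enumerate}
This gives $R^{\frac{1-2\alpha}{2}}\lambda^{\frac{\alpha}{2}-\frac{1}{2(n+1)}}$ for $\lambda\le R^{\frac{1-2\alpha}{1-\alpha}}$ and $R^{\frac{n}{2(n+1)}}$ beyond. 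The gain $\lambda^{-\frac{1}{2(n+1)}}$ over the trivial bound, checked separately for $\alpha<\frac{1}{n+1}$ and $\alpha\ge\frac{1}{n+1}$, is what yields $R^{s_0(n,\alpha)+\varepsilon}$.
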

	
	We are going to use the following result from \cite{DuZhang2019Sharp$L^2$Estimates}. We denote $B_r^{n+1}(x',t')$ the $(n+1)-$dimensional ball of radius r centered at $(x',t')$ where $x'\in \R^n$ and $t'\in \R$. 
	
	\begin{theorem}[Corollary 1.7 in \cite{DuZhang2019Sharp$L^2$Estimates}]\label{theorem_Cor1.7 of Du Zhang}
		Let $n \geq 1$. For any $\varepsilon>0$, there exists a constant $C_{\varepsilon}$ such that the following holds, for all $R \geq 1$ and all $f$ with $\operatorname{supp} \widehat{f} \subset B_1^n(0)$. Fix $L\in \N$. Suppose that $X=\bigcup_{k=1}^L \tildeq_k$ is a union of lattice cubes in $B_R^{n+1}(0)$. Let $1 \leq \beta \leq n+1$ and
		
		\begin{equation}\label{equation_definition of beta density of X}
			\phi
			= \phi_{X,\beta, R}
%			= \phi_{\beta,X}
			=
			\max _{\substack{ B_r^{n+1}(x',t') \subset B_R^{n+1}(0) \\ (x^{\prime},t^{\prime}) \in \mathbb{R}^{n+1}, r \geq 1}} 
			\frac{\#\left\{\tildeq_k\subset X: \tildeq_k \subset B^{n+1}_r\left(x^{\prime},t^{\prime} \right)\right\}}{r^\beta}.
		\end{equation}
		Then,
		\begin{equation}
			\left\|e^{i t \Delta} f\right\|_{L^2(X,\dxt)} 
			%\left\|\schtf{x}\right\|_{L^2(X,\dxt)} 
			\leq C_{\varepsilon} \phi^{\frac{1}{n+1}} R^{\frac{\beta}{2(n+1)}+\varepsilon}\|f\|_2.
		\end{equation}
	\end{theorem}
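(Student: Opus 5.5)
Since this is Corollary 1.7 of \cite{DuZhang2019Sharp$L^2$Estimates}, the natural route is to rerun Du and Zhang's argument rather than look for an independent proof. We write $Ef=e^{it\Delta}f$, restricted to $B_R^{n+1}(0)$, and argue by induction on the scale $R$. The inductive hypothesis is the stated inequality at all smaller scales, uniformly in $X$, $\beta$ and $\phi$. The two ingredients are the $k$-broad/narrow decomposition of Bourgain–Guth and Du–Zhang's multilinear refined Strichartz estimate.

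\textbf{Broad/narrow splitting.} First I would reduce to $X$ whose unit cubes all meet $|Ef|$ at a comparable dyadic level. This costs only $\log R$ factors, by pigeonholing the cubes according to $\|Ef\|_{L^\infty(Q_k)}$ and using local constancy of $Ef$ at unit scale. Next, decompose $B^n_1(0)$ into caps $\tau$ of side $K^{-1}$. For each unit cube $Q_k$, one of two cases holds. In the narrow case, $Ef$ on $Q_k$ is dominated by the caps lying near some $m$-plane with $m\le n$. In the broad case, there are $n+1$ transverse caps $\tau_1,\dots,\tau_{n+1}$ with $|Ef|\lesssim K^{O(1)}\prod_j|Ef_{\tau_j}|^{1/(n+1)}$ on $Q_k$.

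\textbf{Broad cubes.} Here I would use Hölder on $X$ together with the multilinear refined Strichartz estimate of \cite{DuZhang2019Sharp$L^2$Estimates}:
\begin{equation*}
\Big\|\prod_{j=1}^{n+1}|Ef_{\tau_j}|^{\frac1{n+1}}\Big\|_{L^{\frac{2(n+1)}{n}}(B_R)}\lessssim \Big(\frac{\sigma}{R^{n/2}}\Big)^{\frac{1}{n+1}}\|f\|_2,
\end{equation*}
where $\sigma$ is the maximal number of relevant tubes in a lateral $R^{1/2}$-slab. To control $\sigma$ through $\phi$, I would pigeonhole the $R^{1/2}$-balls according to how many cubes of $X$ they contain. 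The density condition gives two bounds: at most $\phi R^{\beta/2}$ unit cubes in each such ball, and at most $\phi R^{\beta}$ unit cubes in total. Hölder from $L^2(X)$ to $L^{2(n+1)/n}$ costs $|X|^{1/(2(n+1))}$, which equals $(\phi R^\beta)^{1/(2(n+1))}$. This factor is then balanced against the gain $(\sigma/R^{n/2})^{1/(n+1)}$, combined with the sharp one-ball estimate and optimized over the pigeonholed parameter. The bookkeeping should produce exactly $\phi^{1/(n+1)}R^{\beta/(2(n+1))}$.

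\textbf{Narrow cubes and the main obstacle.} On narrow cubes I would use decoupling in lower dimensions for the caps near the $m$-plane, followed by parabolic rescaling of each cap. A cap of size $K^{-1}$ becomes a problem at scale $R/K^2$. The rescaled set $X$ is a union of rescaled cubes, and the key check is that its $\beta$-density is at most about $\phi\,K^{O(1)}$ with a favourable power of $K$; this uses $\beta\ge1$ and the anisotropic shape of the rescaling. Then the inductive hypothesis, together with the decoupling constant $K^{(m-1)/2+\varepsilon}$ and orthogonality $\sum_\tau\|f_\tau\|_2^2=\|f\|_2^2$, closes the induction once $K$ is a large power of $R^{\varepsilon}$. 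I expect the main obstacle to be the multilinear refined Strichartz estimate itself. It requires the broad–narrow induction in the full Du–Guth–Li/Du–Zhang form: wave packet decomposition at scales $R^{1/2}$ and $R^{1-\delta}$, tube counting in slabs, and multilinear Kakeya/restriction. I would cite it rather than reprove it, and concentrate on the density-to-$\sigma$ bookkeeping described above.
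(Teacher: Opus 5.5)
The paper gives no proof of this statement. It imports it verbatim from Du--Zhang, where Corollary~1.7 is read off from their stronger Theorem~1.6. That theorem carries an extra parameter $\lambda$, the common number of unit cubes of $X$ in each $R^{1/2}$-cube meeting $X$, and the corollary then follows from $\lambda\lesssim\phi R^{\beta/2}$. So rerunning Du--Zhang is the right idea, but your sketch does not reproduce their argument, and two of its load-bearing steps fail.

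\textbf{First: the displayed multilinear refined Strichartz estimate is false.}
\begin{itemize}
\item Let each $f_{\tau_j}$ be a single $L^2$-normalized wave packet at scale $R$, with the $n+1$ tubes crossing in one $R^{1/2}$-ball $B$.
\item Then $\prod_j|Ef_{\tau_j}|^{1/(n+1)}\sim R^{-n/4}$ on $B$, so your left side is $\gtrsim R^{-n/4}|B|^{\frac{n}{2(n+1)}}\sim 1$.
\item But $\sigma\le n+1$ and $\|f\|_2\sim 1$, so your right side is $\lesssim R^{\varepsilon-\frac{n}{2(n+1)}}$.
\end{itemize}
More generally, at the exponent $\frac{2(n+1)}{n}$ the Bennett--Carbery--Tao bound is attained by $n+1$ families of parallel packets filling a ball of any radius $\rho\in[R^{1/2},R]$. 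So at that exponent no gain in terms of such counts of cubes or tubes is possible at all. The estimate in Du--Zhang has a different shape: it is localized to a union of $N$ lattice $R^{1/2}$-cubes on each of which the $L^2$ mass of every $Ef_{\tau_j}$ is essentially constant, and its gain is a negative power of $N$.

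\textbf{Second: the difficulty is not where you put it.} For the bound stated here, the broad part needs no refinement. Hölder and plain Bennett--Carbery--Tao give $K^{O(1)}R^{\varepsilon}|X|^{\frac{1}{2(n+1)}}\|f\|_2\le K^{O(1)}R^{\varepsilon}(\phi R^{\beta})^{\frac{1}{2(n+1)}}\|f\|_2$. This is already $\lesssim K^{O(1)}R^{\varepsilon}\phi^{\frac{1}{n+1}}R^{\frac{\beta}{2(n+1)}}\|f\|_2$, since $\phi\gtrsim 1$ (test a ball of radius $\sim\sqrt{n+1}$ around one cube).

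All the content is therefore in the narrow step, and as you set it up it does not close. On a sparse set $X$ the only $L^2$ ``decoupling'' available is Cauchy--Schwarz, which is your factor $K^{(m-1)/2}$. Parabolic rescaling plus the hypothesis in the corollary's own form need not return any compensating power of $K$. A concrete case:
\begin{itemize}
\item Take $X$ to be all unit cubes in $B_R$, so $\beta=n+1$ and $\phi\sim 1$.
\item Rescaling a cap $\tau$ and applying the hypothesis at scale $R/K^2$ on the resulting $(R/K)^n\times R/K^2$ slab yields only $\|Ef_\tau\|^2_{L^2(X)}\lesssim C_\varepsilon^2K^{-4\varepsilon}R^{1+2\varepsilon}\|f_\tau\|_2^2$.
\item This cannot be improved, since $\|Ef_\tau\|^2_{L^2(B_R)}\sim R\|f_\tau\|_2^2$ for suitable $f_\tau$.
\item After multiplying by $K^{m-1+2\varepsilon}$ and summing over $\tau$, a factor $K^{m-1-2\varepsilon}\gg 1$ survives for $m\ge 2$.
\end{itemize}

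A correct proof needs two ingredients that your proposal lacks:
\begin{itemize}
\item \emph{A narrow estimate that avoids this loss.} This means genuine lower-dimensional decoupling, which is available only for norms over full balls. One way is to pass from $L^2(X)$ to $L^p$ norms with $p>2$ via Hölder, keeping track of the sparsity of $X$.
\item \emph{An inductive statement stronger than the corollary.} Du--Zhang run the induction on Theorem~1.6, not on Corollary~1.7, and it is in the broad case of that theorem that the multilinear refined Strichartz estimate is used.
\end{itemize}
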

	
	We also need a variant of a well known result. 
	\begin{theorem}\label{theorem_ball-to-band}
		Fix $R>>1$, $s>0$, and $X \subset \R^n\times [0,R]$. Suppose there exists $C>1$ 
		%		independent of $R$, 
		%unnecessary assumption. True in the context of our purposes but whatever.
		such that, for all $f\in L^2$ such that $\supp{\widehat{f}}\subset B_1^n(0)$ and all $k\in R\Z^n$, 
		\begin{align}
			\left\| \schtf{y} \right\|_{
				L^2\left(
				\left(B_R^n(k) \times [0,R]\right)\cap X, dydt\right) }
			&\leq C R^s \|f\|_2.
			\label{eq_bound-ball}
			\intertext{Then, for every $\epsilon>0$ there exists $C_{n,\epsilon}>0$ only depending on the dimension such that for all $f\in L^2(\R^n)$ satisfying that $\supp{\widehat{f}}\subset B_1^n(0)$,}
			\left\| \schtf{y} \right\|_{
				L^2\left(
				\left(\R^n \times [0,R]\right)\cap X, dydt\right) }
			&\leq C_{n,\varepsilon} C R^{s+\varepsilon} \|f\|_2.
			\label{eq_bound-band}
		\end{align}
	\end{theorem}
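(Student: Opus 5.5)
This is the standard localization argument. It rests on two facts: the Schrödinger propagator at frequency scale $1$ has finite speed of propagation up to time $R$, up to Schwartz tails, and the estimate is given separately on each ball $B_R^n(k)$.

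\textbf{Step 1: decompose the datum.} Fix $\varepsilon>0$. Take a smooth partition of unity $\sum_{k\in R\Z^n}\psi(\frac{x-k}{R})=1$ with $\psi$ supported in $B_1^n(0)$; it is convenient to choose $\widehat\psi$ compactly supported and use a slightly enlarged partition, which is standard. Then put
\[
f_k:=f\,\psi\left(\tfrac{\cdot-k}{R}\right).
\]
Since $\widehat f$ is supported in $B_1^n(0)$ and the Fourier transform of the cutoff lives at scale $R^{-1}$, each $\widehat{f_k}$ is supported in $B^n_{1+O(R^{-1})}(0)$. A harmless rescaling, or a fixed enlargement of the constant, lets us still apply \eqref{eq_bound-ball} to each $f_k$. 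Almost orthogonality of the $f_k$ gives $\sum_k\|f_k\|_2^2\lesssim\|f\|_2^2$.

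\textbf{Step 2: localize in space.} Decompose the band as $\R^n\times[0,R]=\bigcup_{k'}B_R^n(k')\times[0,R]$, which has bounded overlap. On the piece over $B_R^n(k')$, split
\[
e^{it\Delta}f=\sum_{|k-k'|\le R^{1+\varepsilon}}e^{it\Delta}f_k+\sum_{|k-k'|> R^{1+\varepsilon}}e^{it\Delta}f_k .
\]

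For the far terms, write $e^{it\Delta}f_k=K_t*f_k$, where $K_t$ is the kernel of $e^{it\Delta}$ composed with a frequency cutoff to $B_2(0)$. Nonstationary phase gives
\[
|K_t(x)|\le C_N(1+|x|)^{-N}\quad\text{for } |x|\ge CR,\ 0\le t\le R.
\]
Hence the far contribution on $B_R^n(k')\times[0,R]$ is bounded by $C_N R^{-N}\|f\|_2$ after summing over $k'$ and $k$, with Schur's test controlling the double sum. Discarding $X$ only helps.

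For the near terms, there are $\lesssim R^{n\varepsilon}$ indices $k$ with $|k-k'|\le R^{1+\varepsilon}$. Cover $B^n_{R^{1+\varepsilon}}(k')$ by $O(R^{n\varepsilon})$ balls $B_R^n(k'')$. Then \eqref{eq_bound-ball} applies to $f_k$ on each of them, giving a bound $CR^s\|f_k\|_2$ per $k$.

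\textbf{Step 3: sum up.} Combining the near terms by Cauchy--Schwarz gives
\[
\|e^{it\Delta}f\|^2_{L^2((B_R(k')\times[0,R])\cap X)}\lesssim R^{n\varepsilon}\,C^2R^{2s}\sum_{|k-k'|\le R^{1+\varepsilon}}\|f_k\|_2^2+\text{tail}.
\]
Sum over $k'$. Each $k$ is counted $\lesssim R^{n\varepsilon}$ times, so the total is
\[
\lesssim R^{2n\varepsilon}C^2R^{2s}\|f\|_2^2 .
\]
Taking square roots and renaming $\varepsilon$ gives \eqref{eq_bound-band}.

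The main obstacle is making the tail bound fully rigorous. The kernel must decay outside the $R^{1+\varepsilon}$-neighbourhood uniformly for $t\le R$; this follows from integrating by parts in the phase $x\cdot\xi+t|\xi|^2$ with $|\xi|\lesssim1$. One must also keep the Fourier support hypothesis exact, which the rescaling remark handles.
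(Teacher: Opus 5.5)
\paragraph{Verdict: same route as the paper, with a gap at the Fourier-support step.}
Your scheme is the standard localization argument that the paper defers to (Lemma~8 of \cite{Rogers2008LocalSmoothingEstimate}): cut $f$ in space at scale $\sim R$, discard pieces farther than $R^{1+\varepsilon}$ by nonstationary phase, apply \eqref{eq_bound-ball} to the $O(R^{n\varepsilon})$ nearby pieces, and sum with Cauchy--Schwarz and bounded overlap. The counting in Steps 2--3 is fine.

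The step that fails is the one you flag at the end. Hypothesis \eqref{eq_bound-ball} is available only for data with $\operatorname{supp}\widehat f\subset B_1^n(0)$, for this particular $X$ and this particular $R$. Your pieces have $\widehat{f_k}\subset B_{1+\delta}(0)$ with $\delta\sim R^{-1}$. Rescaling does not help. If $\operatorname{supp}\widehat g\subset B_\rho(0)$ and $h=g(\cdot/\rho)$, then $\operatorname{supp}\widehat h\subset B_1(0)$ and $e^{it\Delta}g(x)=e^{i\rho^2t\Delta}h(\rho x)$. So you would need the bound for $h$ on the dilated set $\{(\rho x,\rho^2t):(x,t)\in X\}$ over $B^n_{\rho R}(\rho k)\times[0,\rho^2R]$, about which nothing is assumed. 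Since $X$ is arbitrary, even a dilation by $1+O(R^{-1})$ can change it completely. Enlarging the constant does not touch the support condition at all.

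\paragraph{How to fix it.}
Use Galilean invariance together with the stability estimate of Proposition~\ref{proposition_locally constant with series}.
\begin{enumerate}
\item Cut at scale $2R$, so that $\delta\le(\pi R)^{-1}$.
\item Pick unit vectors $\omega_1,\dots,\omega_M$, $M=M(n)$, such that every unit $\omega$ has $\omega\cdot\omega_j\ge 0.9$ for some $j$. A short computation gives $B_{1+\delta}(0)\subset\bigcup_j B_1(\xi_j)$ with $\xi_j=2\delta\omega_j$.
\item Split $\widehat g=\sum_j\widehat g\,\chi_{A_j}$ along a partition with $A_j\subset B_1(\xi_j)$. Write $\widehat g\,\chi_{A_j}=\widehat{h_j}(\cdot-\xi_j)$, where $\operatorname{supp}\widehat{h_j}\subset B_1(0)$ and $\|h_j\|_2\le\|g\|_2$.
\item For the $j$-th piece $g_j$, one has $|e^{it\Delta}g_j(x)|=|e^{it\Delta}h_j(x+4\pi t\xi_j)|$, and $|4\pi t\xi_j|\le 8$ for $0\le t\le R$.
\item The Proposition bounds this by $\sum_{\mathfrak l}(1+|\mathfrak l|)^{-n-1}|e^{it\Delta}(h_j)_{\mathfrak l}(x)|$ at the \emph{same} point $(x,t)$. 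The $(h_j)_{\mathfrak l}$ are translates of $h_j$, so \eqref{eq_bound-ball} applies to each of them on the unchanged set $(B^n_R(k)\times[0,R])\cap X$.
\end{enumerate}
Hence \eqref{eq_bound-ball} holds for $\operatorname{supp}\widehat g\subset B_{1+\delta}(0)$ with $C$ replaced by $C_nC$, and your argument then closes.

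\paragraph{Two smaller points.}
First, $\psi$ and $\widehat\psi$ cannot both be compactly supported. Take $\widehat\psi\in C_c^\infty(B_{1/2}(0))$ with $\widehat\psi(0)=1$; then $\sum_{m\in\Z^n}\psi(x-m)=\sum_m\widehat\psi(m)e^{2\pi i m\cdot x}=1$ by Poisson summation. Now $\psi$ is only Schwartz, so in the far terms you must also use its decay when $y$ is far from $k$. This gives $O(R^{-\varepsilon N})$ rather than $O(R^{-N})$, which is still enough.

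Second, for $n\ge5$ the cylinders $B^n_R(k')\times[0,R]$, $k'\in R\Z^n$, do not cover the band, because the covering radius of $R\Z^n$ is $\sqrt n\,R/2>R$. This is really a defect of the statement. Take $X$ to be the uncovered part of the band: \eqref{eq_bound-ball} then holds vacuously. Yet a datum with $\widehat f$ a bump on $B_{c/R}(0)$, translated to a deep hole of $R\Z^n$, has $\|e^{it\Delta}f\|_{L^2(X)}\gtrsim R^{1/2}\|f\|_2$. So the hypothesis must be read with cubes, or with all centres $k\in\Z^n$. In the paper's application (Proposition~\ref{theorem_proposition final reduction}) the local bound is in fact obtained for arbitrary centres, so nothing is lost there. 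Finally, covering $B^n_{R^{1+\varepsilon}}(k')$ by balls $B^n_R(k'')$ in Step 2 is unnecessary, since the near pieces are only evaluated on $B^n_R(k')\times[0,R]$.
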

%\verluego{Creo que podemos prescindir por entero de C_n. Ver último paso de la prueba. De la última subsección}
The proof is similar to that of the original result. See Lemma 8 in \cite{Rogers2008LocalSmoothingEstimate}.

\section{Maximal bound over the model curve: Proof of Theorem \ref{theorem_maximal bound that implies ae convergence}.
%	 with Fourier support property better justified
}
In this section, we prove \Cref{theorem_maximal bound that implies ae convergence}. It follows from the following proposition.
\begin{prop}
	Let $n\geq 1$ and $0<\alpha<\frac12$. For any $\varepsilon>0$, there exist positive constants $C$ and $C_\varepsilon$ and such that the following holds. For all  $f:\R^n\to\C$ such that 
	%   $\supp\left(\widehat f\right)$, 
	$\supp\widehat f\subset B^n_1(0)$,
	%	$\suppp{\widehat f}$,
	and all $R\geq 1$,
	\begin{align}
		\left\|
		\sup_{0<t<1} 			
		\left|\schtf{x+\theta(t)}\right|
		\right\|
		_{L^2\left(B_R(0),dx\right)} 
		&\leq
		C R^{\frac{1-2\alpha}{2}} \|f\|_2,
		\label{eq_izquierda t<1}
		\intertext{and}
		\left\|
		\sup_{1<t<R} 			
		\left|\schtf{x+\theta(t)}\right|
		\right\|
		_{L^2\left(B_R(0),dx\right)} 
		&\leq
		C_\varepsilon R^{s_0(n,\alpha)+\varepsilon} \|f\|_2,
		\label{eq_izquierda t>1}
	\end{align}
	where $s_0(n,\alpha) = \max\left\lbrace {\frac{1-2\alpha}{2}}, {\frac{n}{2(n+1)}}\right\rbrace $
\end{prop}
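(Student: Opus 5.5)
For \eqref{eq_izquierda t<1} I will argue directly, without Theorem~\ref{theorem_Cor1.7 of Du Zhang}. For $0<t<1$ the frequency localization $\supp\widehat f\subset B_1$ makes $e^{it\Delta}f$ essentially constant at unit scale, uniformly in $t\in[0,1]$. So $|e^{it\Delta}f(x+\theta(t))|$ is controlled by a local maximal function of $|f|$ (or of $f*\psi$, with $\psi$ Schwartz) averaged over the set $x+\theta([0,1])$. This set is a curve of length $\lesssim \max_j R^{1-2\alpha_j}\le R^{1-2\alpha}$. Concretely, I will cover $\theta([0,1])$ by $N\lesssim R^{1-2\alpha}$ unit balls centered at points $y_m$. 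Then
\[
\sup_{0<t<1}|e^{it\Delta}f(x+\theta(t))|\lesssim \Big(\sum_{m=1}^N \sup_{|z|\le 2,\,0<t<1}|e^{it\Delta}f(x+y_m+z)|^2\Big)^{1/2}.
\]
Each term has $L^2$ norm $\lesssim\|f\|_2$, by the standard local-in-time estimate for frequency-localized data: write $e^{it\Delta}f=\int \widehat f(\xi)e^{2\pi i x\xi}e^{4\pi^2 it|\xi|^2}\chi(\xi)\,d\xi$, expand $e^{4\pi^2 it|\xi|^2}\chi(\xi)$ in a rapidly convergent Fourier series on a box, and use translation invariance. Summing the $N$ terms gives $R^{(1-2\alpha)/2}\|f\|_2$, which is \eqref{eq_izquierda t<1}.

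For \eqref{eq_izquierda t>1} I will linearize: choose a measurable $t(x)\in(1,R)$ and bound $\|e^{it(x)\Delta}f(x+\theta(t(x)))\|_{L^2(B_R)}$. Using locally constant behaviour at unit scale, pass to the set $X$ of unit lattice cubes in $\R^n\times[0,R]$ that contain a point $(x+\theta(t(x)),t(x))$. A cube of unit size in $x$ contributes one point, so
\[
\|\cdot\|^2_{L^2(B_R)}\lesssim \|e^{it\Delta}f\|^2_{L^2(X)}
\]
(after the usual averaging over a unit neighbourhood). The key geometric point is this: for $t\ge1$ we have $|\theta'(t)|\lesssim \max_j R^{1-2\alpha_j}t^{\alpha_j-1}$, and the map $x\mapsto x+\theta(t(x))$ is a translation for each fixed $t$. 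Hence $X$ has, at each time level $t$, at most $\sim R^n$ cubes, lying inside $B_R+\theta(t)$. The set $X$ may sit far from $B_R^{n+1}(0)$, because $|\theta(t)|$ can be as large as $R^{1-\alpha}$. So I will use Theorem~\ref{theorem_ball-to-band} to localize spatially to slabs $B_R^n(k)\times[0,R]$, and then apply Theorem~\ref{theorem_Cor1.7 of Du Zhang} on each piece, after translating it into $B_R^{n+1}(0)$.

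Inside one slab $B_R(k)\times[0,R]$, I need to estimate the $\beta$-density $\phi$. A ball of radius $r$ meets the time range of length $r$. During a time interval $I$ of length $r$, the curve $\theta$ moves by $\delta(I)=|\theta(I)|$. The cubes in $X$ within the ball all come from distinct $x$ with $x+\theta(t(x))$ in an $r$-ball, so $x$ ranges over an $r+\delta$ ball intersected with $B_R$, but they are also confined to the $r$-ball. Hence
\[
\#\lesssim \min\{r^{n+1},\ r^n\}\cdot\text{(contribution of at most one cube per }x\text{-unit column)}\lesssim r^n .
\]
So $\beta=n$ gives $\phi\lesssim1$, and Theorem~\ref{theorem_Cor1.7 of Du Zhang} yields $R^{n/(2(n+1))+\varepsilon}$. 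That would already be sharp, but it ignores that Theorem~\ref{theorem_ball-to-band} requires the bound per slab, and that the linearized set actually spans many slabs. I expect the main obstacle to be the bookkeeping when $\theta$ moves fast, near $t\sim1$, where the speed is $\sim R^{1-2\alpha}$. There the $x$-columns feeding one slab could be numerous, and the map from $x$ to cubes need not be injective on slabs in a way compatible with Theorem~\ref{theorem_ball-to-band}.

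To handle this obstacle, I will try a dyadic decomposition $t\sim 2^k$. On each dyadic piece the displacement of $\theta$ is $\lesssim R^{1-2\alpha}2^{k\alpha}$. I will then compare this displacement with $R$. Where it is $\le R$, the argument above applies within $O(1)$ slabs. Where it exceeds $R$, I will subdivide time into intervals on which the displacement is $\lesssim R$, each treated as above. Square-summing the pieces in $x$ should then lose at most a factor of the square root of the number of such intervals, which is bounded by the $R^{(1-2\alpha)/2}$ term. The final bound is the maximum of the two contributions, that is $R^{s_0(n,\alpha)+\varepsilon}$, with the $\log R$ from the dyadic decomposition absorbed into $R^\varepsilon$.
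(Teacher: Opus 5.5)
Your argument for \eqref{eq_izquierda t<1} is correct and is essentially the paper's. The genuine gap is in \eqref{eq_izquierda t>1}, and it appears already at the first step.

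\textbf{The reduction to $X$.} Passing from the maximal function to $\|e^{it\Delta}f\|_{L^2(X)}$ with ``one point per $x$-cell'' assumes that distinct unit cells select distinct space-time cubes. They need not. At times $t\sim\lambda$ the curve moves a distance $\sim R^{1-2\alpha}\lambda^{\alpha-1}$ per unit time. So cells $j\neq j'$ with $j-j'=\theta(t_{j'})-\theta(t_j)$ and $|t_j-t_{j'}|\le 1$ select the same cube.

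Concretely, let $\widehat f\ge0$ be a bump supported in $B_\delta(0)$ with $\delta$ small, so that $|e^{it\Delta}f(y)|\gtrsim\|f\|_2$ for $|y|\le 1$ and $1\le t\le 2$. The maximal function is then $\gtrsim\|f\|_2$ on $B_1(0)-\theta([1,2])$. This set lies in $B_R(0)$ for $R$ large and has measure $\gtrsim R^{1-2\alpha}$. Yet every one of these cells selects a cube in the bounded, $R$-independent region where $|e^{it\Delta}f|\gtrsim\|f\|_2$, so only $O(1)$ cubes are involved.

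The same example shows that the left side of \eqref{eq_izquierda t>1} is $\gtrsim R^{\frac{1-2\alpha}{2}}\|f\|_2$. Hence the bound $R^{\frac{n}{2(n+1)}+\varepsilon}$ that you say ``would already be sharp'' is false for $\alpha<\frac{1}{2(n+1)}$.

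\textbf{The density count.} It has the matching error: it identifies the cell $x$ with the spatial position $x+\theta(t(x))$ of its cube. The cubes of $X$ inside $B^{n+1}_r(y,\tau)$ come from cells in the tube $B_r(y)-\theta([\tau-r,\tau+r])$, whose volume is $\sim r^{n}\max\{1,R^{1-2\alpha}\tau^{\alpha-1}\}$. Assigning distinct integer times $t_m$ to the disjoint translates $B_{r/2}(y)-\theta(t_m)$ really produces up to $\sim r^n\min\{r,R^{1-2\alpha}\lambda^{\alpha-1}\}$ cubes. Your count uses nothing about $t(x)$ beyond its being a selection, so $\phi\lesssim 1$ with $\beta=n$ is not available.

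\textbf{Your proposed repair.} It does not address either problem. For $t\le R$ you have $|\theta(t)|\le\sqrt n\,R^{1-\alpha}\ll R$. So the displacement over a dyadic time interval never exceeds $R$, and the subdivision is vacuous. For the same reason, $X$ never leaves $B^{n+1}_{CR}(0)$, so the slab bookkeeping you worried about is not the issue. Moreover, splitting the supremum into pieces and square-summing multiplies the losses. At best this yields a product of the two factors, not $\max\{\frac{1-2\alpha}{2},\frac{n}{2(n+1)}\}$.

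\textbf{The missing idea.} What is needed is a trade-off between multiplicity and density, which the paper carries out as follows.
\begin{enumerate}
\item Pigeonhole the selected cubes by time scale $t\sim\lambda$ and by multiplicity $\eta=\#\{j:Q_j=Q\}$, noting that $\eta\lesssim R^{1-2\alpha}\lambda^{\alpha-1}$.
\item Bound the squared maximal function by $\sum_{\lambda,\eta}\eta\,\|e^{it\Delta}f\|^2_{L^2(X_{\lambda,\eta})}$, summed over the modulations from Proposition~\ref{proposition_locally constant with series}.
\item Since each cube of $X_{\lambda,\eta}$ absorbs $\sim\eta$ cells, the tube count gives $\phi_{X_{\lambda,\eta},n,R}\lesssim\eta^{-1}\max\{1,R^{1-2\alpha}\lambda^{\alpha-1}\}$.
\item Apply Theorem~\ref{theorem_Cor1.7 of Du Zhang} at scale $\lambda$, not $R$, after localizing to $\lambda$-balls with Theorem~\ref{theorem_ball-to-band}; this is legitimate because $X_{\lambda,\eta}$ lies in a slab of height $\lambda$.
\end{enumerate}
This gives $\eta^{\frac12}\|e^{it\Delta}f\|_{L^2(X_{\lambda,\eta}\cap(B^n_\lambda(k)\times[0,2\lambda]))}\lesssim_\varepsilon R^{\varepsilon}\eta^{\frac12-\frac1{n+1}}(\max\{1,R^{1-2\alpha}\lambda^{\alpha-1}\})^{\frac1{n+1}}\lambda^{\frac{n}{2(n+1)}}\|f\|_2$. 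For $\lambda\le R^{\frac{1-2\alpha}{1-\alpha}}$ this is at most $R^{\varepsilon}R^{\frac{1-2\alpha}{2}}\lambda^{\frac{\alpha}{2}-\frac{1}{2(n+1)}}\|f\|_2$. For larger $\lambda$ it is at most $R^{\varepsilon}\lambda^{\frac{n}{2(n+1)}}\|f\|_2$. Both are $\le R^{s_0(n,\alpha)+\varepsilon}\|f\|_2$.

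The relevant threshold is where the unit-time displacement of $\theta$ equals $1$, not $R$. Working at scale $R$ throughout would cost $R^{\frac{1-2\alpha}{2}+\frac{n}{2(n+1)}}$ at $\lambda\sim 1$.
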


We first prove \eqref{eq_izquierda t<1}. Before this, we need to introduce the following stability property of the Schrödinger operator. More general versions of the estimate below appear in an article of T. Tao \cite{Tao1999BochnerRieszConjectureImplies} from 1999 and an article of M. Christ \cite{Christ1988RegularityInversesSingular} from 1988.

\begin{prop}\label{proposition_locally constant with series}
Suppose that 
$ \supp\widehat f \subset B^n_1(0)$. 
%$ \supp{\widehat f} \subset B_1(\xi_0) $
%	for some $ \xi_0\in \R^n. $
If $ |x-y| \leq 8 $ and $ |t-s|\leq 8, $ then,
\begin{equation}
	\left|e^{it\Delta}f(x)\right|
	\leq
	\sum_{\mathfrak{l}\in\Z^n} 
	\frac{1}{(1+|\mathfrak{l}|)^{n+1}} \left|e^{is\Delta}f_\mathfrak{l}(y)\right|,
\end{equation}
where $ \widehat{f_\mathfrak{l}}(\xi) = e^{2\pi i\mathfrak{l}\xi}\widehat f(\xi)$.
Hence, by Hölder's inequality, for $p\geq1$,
\begin{equation}\label{equation_stability lemma pointwise}
	\left|e^{it\Delta}f(x)\right|^p
	\leq
	\sum_{\mathfrak{l}\in\Z^n} \frac1{(1+|\mathfrak{l}|)^{n+1}}
	\left|e^{is\Delta}f_\mathfrak{l}(y)\right|^p.
\end{equation}
Therefore, if $ |x-x'| \leq 4 $ and $ |t-t'|\leq 4 $, then, for all $p>0$
\begin{equation}\label{equation_stability lemma aka locally constant property substitute}
	\left|e^{it\Delta}f(x)\right| ^p
	\leq
	\sum_{\mathfrak{l}\in\Z^n} 
	\frac{1}{(1+|\mathfrak{l}|)^{n+1}} \int_{t'}^{t'+1}\int_{B_1(x')}\left|e^{is\Delta}f_\mathfrak{l}(y)\right|^p dyds,
\end{equation}	
and,
\begin{equation}
	\int_t^{t+1}\int_{B_1(x)} \left|e^{ir\Delta}f(u)\right|^pdudr
	\leq
	\sum_{\mathfrak{l}\in\Z^n} 	\frac{1}{(1+|\mathfrak{l}|)^{n+1}} \int_{t'}^{t'+1}\int_{B_1(x')}\left|e^{is\Delta}f_\mathfrak{l}(y)\right|^p dyds.
\end{equation}
\end{prop}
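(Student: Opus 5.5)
The plan is to derive the pointwise inequality from a reproducing formula on the Fourier side, and then obtain the remaining statements from it by Hölder's inequality and averaging.

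\emph{The reproducing formula.} Fix a bump $\psi\in C_c^\infty(\R^n)$ with $\psi\equiv1$ on $B_1^n(0)$ and $\supp\psi\subset[-\pi,\pi]^n$ (after harmless rescaling we may take the cube $[-1/2,1/2]^n$ up to a fixed dilation). Since $\widehat f=\widehat f\,\psi$, we can write
\begin{equation*}
e^{it\Delta}f(x)=\int e^{2\pi i y\cdot\xi}e^{4\pi^2 is|\xi|^2}\widehat f(\xi)\,m(\xi)\,d\xi,
\qquad m(\xi)=\psi(\xi)e^{2\pi i(x-y)\cdot\xi}e^{4\pi^2 i(t-s)|\xi|^2}.
\end{equation*}
The multiplier $m$ is smooth and supported in a fixed cube. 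Its derivatives up to any order are bounded uniformly in $|x-y|\le 8$ and $|t-s|\le 8$. Expand $m$ in a Fourier series on that cube:
\begin{equation*}
m(\xi)=\sum_{\mathfrak l} c_{\mathfrak l}\,e^{2\pi i\mathfrak l\cdot\xi}.
\end{equation*}
After the normalization of the period we get $|c_{\mathfrak l}|\lesssim(1+|\mathfrak l|)^{-N}$ for every $N$, uniformly in the parameters. Integrating by parts $n+1$ times suffices. Substituting the series gives
\begin{equation*}
e^{it\Delta}f(x)=\sum_{\mathfrak l} c_{\mathfrak l}\,e^{is\Delta}f_{\mathfrak l}(y).
\end{equation*}
Taking absolute values yields the first inequality. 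The implicit constant can be absorbed by choosing a larger decay power $N$, or by accepting the constant as the paper does. The main care needed in this step is that the coefficient decay is uniform in $(x,y,t,s)$.

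\emph{Passing to powers.} For $p\ge1$, apply Hölder's inequality with respect to the finite measure $w_{\mathfrak l}=(1+|\mathfrak l|)^{-(n+1)}$. Write the weights as $w_{\mathfrak l}^{1/p'}w_{\mathfrak l}^{1/p}$, using more decay in $c_{\mathfrak l}$ than stated, and use $\sum_{\mathfrak l}w_{\mathfrak l}<\infty$. For $0<p<1$, the same pointwise bound follows instead from $(\sum a_j)^p\le\sum a_j^p$ together with $c_{\mathfrak l}^p$ still being summable, since we have arbitrary decay. This also explains the ``for all $p>0$'' in the averaged statements.

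\emph{The averaged statements.} Suppose $|x-x'|\le4$ and $|t-t'|\le4$. Then every $(y,s)\in B_1(x')\times[t',t'+1]$ satisfies $|x-y|\le 5\le8$ and $|t-s|\le5\le8$. So the pointwise bound holds for each such $(y,s)$, and averaging it over this unit-size region (whose measure is comparable to $1$) gives \eqref{equation_stability lemma aka locally constant property substitute}. For the last inequality, apply the same pointwise bound at each $(u,r)\in B_1(x)\times[t,t+1]$ with $(y,s)$ ranging over the primed region. All distances stay below $8$, since $|u-y|\le 1+4+1$ and $|r-s|\le 1+4+1$. Averaging in $(y,s)$ and then integrating in $(u,r)$ completes the proof.
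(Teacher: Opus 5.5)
The paper gives no proof of this proposition beyond citing Tao and Christ. Your route is the standard way such estimates are proved: insert a cutoff $\psi$, move $e^{2\pi i(x-y)\cdot\xi}e^{4\pi^2 i(t-s)|\xi|^2}\psi(\xi)$ into a multiplier $m$, and expand $m$ in a Fourier series. Your H\"older step, the case $0<p<1$, and the distance bookkeeping ($5\le 8$, $6\le 8$) for the averaged inequalities are fine.

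\textbf{The gap is the ``harmless rescaling''.} A series $\sum_{\mathfrak l\in\Z^n}c_{\mathfrak l}e^{2\pi i\mathfrak l\cdot\xi}$ is $\Z^n$-periodic. But $B^n_1(0)$ contains $\pm e_1/2$, and for $t=s$ the multiplier $m$ equals $e^{\pm\pi i(x-y)_1}$ there. So no such series represents $m$ on $B^n_1(0)$.
\begin{itemize}
\item The period $L$ must exceed $2$. Your cube $[-\pi,\pi]^n$ gives $L=2\pi$ and the basis $e^{i\mathfrak l\cdot\xi}$.
\item What the argument then produces is $\widehat{f_{\mathfrak l}}(\xi)=e^{2\pi i\mathfrak l\cdot\xi/L}\widehat f(\xi)$, i.e.\ the translates $f(\cdot+\mathfrak l/L)$ rather than $f(\cdot+\mathfrak l)$.
\item Parabolically rescaling $f$ to shrink its frequency support gives exactly the same thing, since unit translates at the new scale are translates by $\mathfrak l/L$ at the old one.
\end{itemize}
This cannot be repaired, because the statement with integer translates is false. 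Take $t=s=0$, $y=0$, $x=e_1/2$, and $f(z)=\sin(\pi z_1)\phi(z)$ with $0\le\widehat\phi\not\equiv 0$ supported in $B^n_{1/4}(0)$. Then $\supp\widehat f\subset B^n_1(0)$, and every term $e^{is\Delta}f_{\mathfrak l}(y)=f(\mathfrak l)$ vanishes. Yet $\operatorname{Re} f(e_1/2)=\int\widehat\phi(\xi)\cos(\pi\xi_1)\,d\xi>0$.

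\textbf{The constant cannot be absorbed.} Your remark that a larger $N$ absorbs the constant is wrong. A larger $N$ only improves the bound for large $|\mathfrak l|$, and it enlarges $C_N$, since each derivative of $m$ brings factors such as $2\pi(x-y)$ and $8\pi^2(t-s)\xi$. For $n=1$, a wave packet concentrated near $x$ with $|x-y|=8$ shows that constant $1$ actually fails. The $O(L)$ lattice points $y+\mathfrak l/L$ within distance $O(1)$ of $x$ carry weights of order $(8L)^{-2}$, and the packet is negligible elsewhere.

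\textbf{What you have actually proved} is
\[
|e^{it\Delta}f(x)|\le C_n\sum_{\mathfrak l\in\Z^n}(1+|\mathfrak l|)^{-(n+1)}|e^{is\Delta}f_{\mathfrak l}(y)|, \qquad f_{\mathfrak l}=f(\cdot+\mathfrak l/L),
\]
together with the $p$-th power and averaged versions with constants $C_{n,p}$. You should state it in that form. This is all the paper uses later: there only $\|f_{\mathfrak l}\|_2=\|f\|_2$, $\supp\widehat{f_{\mathfrak l}}\subset B^n_1(0)$, and bounds up to $\lesssim$ are needed.
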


We turn to the proof of \eqref{eq_izquierda t<1}.
\begin{proof} 
	Fix $f$ with $\supp \widehat f \subset B^n_1(0)$ and $x\in\R^n$. Define $N(\theta)$ to be the set of integers inside the $1$-neighborhood of $\theta([0,1])$. The integrand on the left hand side of \eqref{eq_izquierda t<1} is
\begin{align}
		\sup_{0<t<1} \left|e^{it\Delta}f(x+\theta(t))\right| 
		&= 
		\max_{k\in N(\theta)} \sup_{t\in \theta^{-1}(B_1(k))\cap[0,1]} 	\left|e^{it\Delta}f(x+\theta(t))\right|.
	\intertext{For each $k$, choose $t_k$ as an element in $\theta^{-1}(B_1(k))\cap[0,1]$ such that
	}
		&\leq 2 \max_{k\in N(\theta)} \left|e^{it_k\Delta}f(x+\theta(t_k))\right|.
	\intertext{Since $|t_k|\leq 1$ and $|x+\theta(t_k)-(x+k)|\leq1$, by 
%		\eqref{equation_stability lemma aka locally constant property substitute} 
		Proposition \ref{proposition_locally constant with series}
		we obtain
	}	
%		&\leq 2 \max_{k\in N(\theta)}
%		\sum_{\mathfrak l\in\Z^n} \frac{1}{(1+|\mathfrak l|)^{n+1}}
%		\int_0^1\int_{B_1(x+k)} |e^{is\Delta}f_{\mathfrak l}(y)| dyds.
%	\intertext{By Cauchy-Schwartz inequality, this becomes}
	&\lesssim \max_{k\in N(\theta)}
	\left(
	\sum_{\mathfrak l\in\Z^n} \frac{1}{(1+|\mathfrak l|)^{n+1}} 
	\int_0^1\int_{B_1(x+k)}|e^{is\Delta}f_\mathfrak{l}(y) |^2 dyds
	\right)^\frac12.
\end{align}

Now take the squared $2-$norm of the above,
\begin{align}
	& \left\|\sup_{0<t<1}\left|e^{it\Delta}f(x+\theta(t))\right|\right\|_{L^2(B_R(0),dx)}^2
	\\
	\lesssim &
	\int_{B_R(0)} \left( 
	\max_{k\in N(\theta)}
	\sum_{\mathfrak l\in\Z^n} \frac{1}{(1+|\mathfrak l|)^{n+1}} 
	\int_0^1\int_{B_1(x+k)} 
	|e^{is\Delta}f_\mathfrak{l}(y) |^2 
	dyds 
	\right)dx.
	\intertext{Majorize the maximum over $k$ by a sum over $k$ and apply Fubini's theorem to get}
	\lesssim &
	\sum_{\mathfrak l\in\Z^n} \frac{1}{(1+|\mathfrak l|)^{n+1}} 
	\int_0^1\int_{\R^n} 
	|e^{is\Delta}f_\mathfrak{l}(y)|^2 
	\left( 
		\sum_{k\in N(\theta)}
		\int_{B_R(0)} \chi_{B_1(x+k)}(y)dx
	\right)
	dyds .
	\intertext{The innermost integral is equal to a dimensional constant, and the amount of terms in the sum over $k$ is about $\len( \theta[0,1])\simeq R^{1-2\alpha}$, as calculated later in \eqref{eq_bound length of curve}. Therefore the above is}
	\simeq &
	R^{1-2\alpha} \sum_{\mathfrak l\in\Z^n} \frac{1}{(1+|\mathfrak l|)^{n+1}} 
	\int_0^1\left\|e^{is\Delta}f_{\mathfrak l}(y)\right\|_{L^2(dy)}^2 ds
	\\
	= &
	R^{1-2\alpha} \sum_{\mathfrak l\in\Z^n} \frac{1}{(1+|\mathfrak l|)^{n+1}}
	\|f_{\mathfrak l}\|_2^2
	\\
	\simeq &  R^{1-2\alpha} \|f\|_2^2.
\end{align}
\end{proof}

%\subsection{Proof of $s_0(n,\alpha)\geq \max\left\lbrace\frac{1-2\alpha}{2}, \frac{n}{2(n+1)}\right\rbrace$.}
We can now move on to completing the proof of \eqref{eq_izquierda t>1}. Before that, we need a few definitions.

Fix $f$ with $\supp\widehat f\subset B^n_1(0)$. Fix $j\in \Z^n$, $|j|\leq R$. 
Pick $t_j\in[1,R]$ such that
\begin{equation}\label{eq_lcp consequence for Qx}
	\sup\limits_{1<t<R}
	|\schtf{j+\theta(t)}|
%	\simeq
	\leq 2
	%		\sup\limits_{\lambda<t<2\lambda}
		\left|e^{it_j\Delta}f(j+\theta(t_j))\right| 
	,
\end{equation}
Given $j\in\Z^n$, denote by $\tildeq_j$ the $n+1$-dimensional lattice cube that contains $(j+\theta(t_j),t_j)$.
Define 
\begin{align}
	\verluegocinco{X_f=} X = \left( B_R^{n}(0)\times[1,R] \right) \cap
	\left(
	\bigcup\limits_{|j|\leq R} \tildeq_j
	\right),
	\intertext{and, for each dyadic number $1\leq \lambda\leq R$, define}
	X_{\lambda} = X\cap(\R^n\times[\lambda,2\lambda]).
\end{align}
Observe that, by definition \eqref{eq_definition of theta} of $\theta$,
we have 
%that $X\subset B_{2R}^{n+1}(0)$ and 
that $\{j: \tildeq_j \subset X\}\subset B_{2R}^n(0).$

Let $\eta\geq 1$ be a dyadic number. 
Let $\mathcal{F}_\eta$ be the collection of all $(n+1)$-dimensional lattice cubes $\tildeq \subset \mathbb{R}^{n+1}$ satisfying
\begin{equation}\label{eq_definition of overlap in the family of X lambda eta}
\#\{y \in B_R^n(0) \cap \mathbb{Z}^n : \tildeq  = \tildeq_y\} \sim \eta.
\end{equation}

Also define
\begin{equation}
	X_{\lambda,\eta} 
	\defeq
	X_{\lambda} \cap \left(\bigcup\limits_{\tildeq \in \mathcal F_\eta}\tildeq\right).
\end{equation}

\begin{remark}\label{remark_on the size of eta}
	We necessarily have
%	$1\leq \eta \lesssim R^{1-2\alpha}\lambda^{\alpha-1}.$ 
	$1\leq \eta \lesssim R^{1-2\alpha}\lambda^{\alpha-1}.$ 
	Otherwise, 
	$X_{\lambda,\eta}=\emptyset$.
%	$\mathcal F_\eta=\emptyset$
\end{remark}
\begin{proof}
		Fix $\lambda\geq 1$ and $\eta\geq 1$. Fix a lattice cube $\tildeq \in \mathcal F_\eta$ such that $\tildeq \subset \R^n\times[\lambda,2\lambda]$. Let $\lambda\leq\lambda_1\leq 2\lambda-1$ be the integer such that 
		%	$\tildeq\subset \R^n\times[\lambda_1,2\lambda_1]$. 
		$\tildeq\subset \R^n\times[\lambda_1,\lambda_1+1]$. 
		Consider the set
		\begin{equation}
			\left\{ j\in\Z^n: \tildeq = \tildeq_j \right\} = \left\{j\in\Z^n : (j+\theta(t_j),t_j)\in\tildeq\right\}
			\label{eq_set of js that make ~Qj to be ~Q}
		\end{equation}
		The set is nonempty by definition of $\eta$. We will bound the cardinal of this set from above.  
		Fix an element $j$ in the set $\eqref{eq_set of js that make ~Qj to be ~Q}$. Take $k\in \Z^n$. Call $c_n=\sqrt{n+1}$. In this setting, in order to have
			$(k+\theta(t_k),t_k)\in\tildeq$ it is necessary to satisfy
		\begin{align}
			|j+\theta(t_j) - (k + \theta(t_k))|&\leq c_n.
		\intertext{
		For this, it is necessary to satisfy
		}
			|j+\theta(t_j) - (k + \theta(t))|&\leq c_n \text{ for some } t\in [\lambda_1,\lambda_1+1].
		\end{align}
		This is equivalent to having that $k$ lies in the tubular $c_n$-neighborhood of
		$\{j+\theta(t_j)- \theta(t):\, t\in[\lambda_1,\lambda_1+1] \}$.
		This tubular $c_n$-neighborhood has an $n-$dimensional volume bounded above by
		\begin{equation}
			\widetilde{c_n}\cdot\max\{1,\len\left(\theta[\lambda_1, \lambda_1+1]\right)\}.
		\end{equation}
		
	We conclude that
	$
		\#\{k\in\Z^n:\tildeq = Q_k\}\leq \widetilde{c_n} \cdot \max\{1,\len\left(\theta[\lambda_1, \lambda_1+1]\right)\}.
%		\label{eq_the set of js is bounded above by the length of theta}
	$
	Now,
	\begin{align}
		%		\len\left(\theta[t_{\tildeq}, t_{\tildeq}+1]\right)
		\len\left(\theta[\lambda_1, \lambda_1+1]\right)
		&= \int_{\lambda_1}^{\lambda_1+1} |\theta'(s)|ds 
		\lesssim \int_{\lambda_1}^{\lambda_1+1}R^{1-2\alpha}s^{\alpha-1} ds
		\\
		&\leq 
		\int_{\lambda_1}^{\lambda_1+1}R^{1-2\alpha}\lambda^{\alpha-1} ds
		= R^{1-2\alpha}\lambda^{\alpha-1} .
		\label{eq_bound length of curve}
	\end{align}
	By definition of $\eta$, 
%	and the bounds in \eqref{eq_the set of js is bounded above by the length of theta} and \eqref{eq_bound length of curve}, 
	we are done.

\end{proof}
%	Observe that, whenever $\lambda\geq R^{\frac{1-2\alpha}{1-\alpha}}$, it can only be $\eta \sim 1$.

Let us perform two reductions to prove \eqref{eq_izquierda t>1}.
\begin{lemma}\label{lemma_reduce integration support of central estimate}
	%		By definition of $X_{\lambda,\eta}$, in
%	In order to prove \eqref{eq_izquierda t>1}, it suffices to prove that, f
	For all $\varepsilon>0$, there exists $C_\varepsilon>0$ such that
	\begin{equation}\label{eq_lemma_reduce integration support of central estimate}
		\left\|\schtf{y}\right\|_{L^2(X_{\lambda,\eta},dydt)}
		\lessssim 
		\eta^{-\frac12} R^{s_0(n,\alpha)+\epsilon} \|f\|_2,
	\end{equation}
	for all 
%	$1\leq \lambda\leq R^{\frac{1-2\alpha}{1-\alpha}}$, 
	$1\leq \lambda\leq R$, 
	all $1\leq\eta\lesssim R^{1-2\alpha}\lambda^{\alpha-1}$.
%	and 
%	$k\in \lambda\Z^n$.
\end{lemma}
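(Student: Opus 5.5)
The plan is to obtain \eqref{eq_lemma_reduce integration support of central estimate} from \Cref{theorem_Cor1.7 of Du Zhang}, applied to the union of lattice cubes $X_{\lambda,\eta}$. The work is in estimating the $\beta$-density $\phi_{X_{\lambda,\eta},\beta,R}$ and then choosing $\beta$. If it helps to localize, I would first cut space into balls $B^n_{\lambda}(k)$ or $B^n_R(k)$ and reassemble with \Cref{theorem_ball-to-band}, which costs only $R^{\varepsilon}$. The gain $\eta^{-1/2}$ has to come from the definition of $\mathcal F_\eta$. Each cube of $X_{\lambda,\eta}$ equals $\tildeq_j$ for about $\eta$ different $j$, and $|j|\lesssim R$. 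So the number of cubes of $X_{\lambda,\eta}$ in any region is at most $\eta^{-1}$ times the number of relevant $j$.

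\textbf{Counting step.} Fix a ball $B^{n+1}_r(x',t')$. If $\tildeq_j$ lies in it, then $t_j\in[t'-r,t'+r]\cap[\lambda,2\lambda]$ and $j+\theta(t_j)\in B_r(x')$. Hence $j$ lies in the $r$-neighbourhood of the curve piece $\{x'-\theta(t): t\in[t'-r,t'+r]\cap[\lambda,2\lambda]\}$. By the computation \eqref{eq_bound length of curve}, this piece has length
\begin{equation*}
L(r)\lesssim \min\{r,\lambda\}\,R^{1-2\alpha}\lambda^{\alpha-1}.
\end{equation*}
The number of such $j$ is therefore $\lesssim r^{n-1}(r+L(r))$. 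Dividing by $\eta$ gives
\begin{equation*}
\#\{\tildeq\subset X_{\lambda,\eta}\cap B_r^{n+1}\}\lesssim \eta^{-1}\, r^{n}\,\bigl(1+R^{1-2\alpha}\lambda^{\alpha-1}\bigr).
\end{equation*}
This is also bounded by the trivial count $r^{n+1}$, and by $\eta^{-1}R^n$ when $r\sim R$.

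\textbf{Choosing $\beta$.} I would evaluate \Cref{theorem_Cor1.7 of Du Zhang} with $\beta=n$ and with $\beta$ near $n+1$, and interpolate as needed. With $\beta=n$ and $M=R^{1-2\alpha}\lambda^{\alpha-1}$, we get $\phi\lesssim (1+M)/\eta$. This yields
\begin{equation*}
\|e^{it\Delta}f\|_{L^2(X_{\lambda,\eta})}\lesssim_\varepsilon \Bigl(\tfrac{1+M}{\eta}\Bigr)^{\frac1{n+1}} R^{\frac{n}{2(n+1)}+\varepsilon}\|f\|_2 .
\end{equation*}
For $\eta\sim 1$ and $\lambda\gtrsim R^{\frac{1-2\alpha}{1-\alpha}}$ we have $M\lesssim1$, and this is exactly the claim.

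\textbf{Main obstacle.} The hard regime is $\eta$ close to its maximal size $M$ (\Cref{remark_on the size of eta}) with $M$ large. There the factor $\eta^{-1/(n+1)}$ is weaker than the required $\eta^{-1/2}$. I would pair the estimate above with an $\eta$-sensitive $L^2$ bound modelled on the proof of \eqref{eq_izquierda t<1}. Group the $j$'s according to their cube; a cube hit by $\eta$ values of $j$ forces those $j$ to lie along a curve segment of length about $\eta$. Then \Cref{proposition_locally constant with series} together with conservation of $\|e^{it\Delta}f\|_2$ on unit time slabs should give
\begin{equation*}
\|e^{it\Delta}f\|^2_{L^2(X_{\lambda,\eta})}\lesssim \eta^{-1}\,M\,\|f\|_2^2 \lesssim \eta^{-1}R^{1-2\alpha}\|f\|_2^2 .
\end{equation*}
I would then take the geometric mean of this bound with the Du--Zhang bound, splitting the cases $M\lesssim R^{\frac{n}{n+1}}$ and $M\gtrsim R^{\frac{n}{n+1}}$. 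The aim is to land on $\eta^{-1/2}R^{\max\{\frac{1-2\alpha}{2},\frac{n}{2(n+1)}\}+\varepsilon}$. Making this $\eta$-sensitive bound rigorous is the step I expect to be delicate, since the time slabs must be counted without losing the overlap factor $\eta$.
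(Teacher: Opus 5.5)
Your counting step is correct. It is exactly the paper's density bound (Lemma~\ref{lemma_key bound of the phi density}): $\phi_{X_{\lambda,\eta},n,R}\lesssim\eta^{-1}\max\{1,M\}$ with $M=R^{1-2\alpha}\lambda^{\alpha-1}$. The gap is in the scale at which you apply Theorem~\ref{theorem_Cor1.7 of Du Zhang}. At scale $R$ you get $R^{\frac{n}{2(n+1)}}$. For $\eta\sim M$ and $\lambda\sim1$ this gives $\eta^{\frac12}\|e^{it\Delta}f\|_{L^2(X_{\lambda,\eta})}\lesssim M^{\frac12}R^{\frac{n}{2(n+1)}+\varepsilon}=R^{\frac{1-2\alpha}{2}+\frac{n}{2(n+1)}+\varepsilon}$, which is far off.

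You list localization to $B^n_\lambda(k)$ as optional, but it is the crux. Since $X_{\lambda,\eta}\subset\R^n\times[\lambda,2\lambda]$, apply Theorem~\ref{theorem_ball-to-band} at scale $\sim\lambda$. Each piece $X_{\lambda,\eta}\cap(B^n_\lambda(k)\times[\lambda,2\lambda])$, translated by $(k,0)$, lies in a ball of radius $O(\lambda)$. Corollary 1.7 at scale $\lambda$ then gives $\lambda^{\frac{n}{2(n+1)}}$, and your density count still applies: density at scale $\lambda$ is dominated by density at scale $R$, and the translation only changes $R$ to $2R$. Hence
\begin{equation*}
\eta^{\frac12}\|e^{it\Delta}f\|_{L^2(X_{\lambda,\eta}\cap(B^n_\lambda(k)\times[\lambda,2\lambda]))}\lesssim_\varepsilon R^{\varepsilon}\,\eta^{\frac12-\frac1{n+1}}\max\{1,M\}^{\frac1{n+1}}\lambda^{\frac{n}{2(n+1)}}\|f\|_2 .
\end{equation*}
Because $\frac12-\frac1{n+1}\ge0$ and $\eta\lesssim\max\{1,M\}$, your ``hard regime'' just costs $\max\{1,M\}^{\frac12}$.
\begin{itemize}
\item If $M\lesssim1$, the bound is $\lambda^{\frac{n}{2(n+1)}}\le R^{\frac{n}{2(n+1)}}$.
\item If $M\ge1$, the bound is $R^{\frac{1-2\alpha}{2}}\lambda^{\frac\alpha2-\frac1{2(n+1)}}$. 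This is $\le R^{\frac{1-2\alpha}{2}}$ when $\alpha<\frac1{n+1}$. When $\alpha\ge\frac1{n+1}$, use $\lambda\le R^{\frac{1-2\alpha}{1-\alpha}}$ to get at most $R^{\frac{1-2\alpha}{1-\alpha}\cdot\frac{n}{2(n+1)}}\le R^{\frac{n}{2(n+1)}}$.
\end{itemize}
No interpolation and no extra $\eta$-sensitive estimate is needed.

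The auxiliary estimate you planned to interpolate with, $\|e^{it\Delta}f\|^2_{L^2(X_{\lambda,\eta})}\lesssim\eta^{-1}M\|f\|_2^2$, is false, so your route cannot be completed. The analogy with the proof of \eqref{eq_izquierda t<1} breaks down. There, all relevant unit balls lie in the single time slab $[0,1]$, so $L^2$ conservation is used once and the multiplicity is the length $R^{1-2\alpha}$. The cubes of $X_{\lambda,\eta}$ are spread over about $\lambda$ unit time slabs. Grouping the $j$'s by cube does not control how many slabs are met, so conservation only gives the trivial $\lambda\|f\|_2^2$.

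A wave packet makes this concrete (sketch). Take $\widehat g$ supported in a ball of radius $\lambda^{-1/2}$, so that $|e^{it\Delta}g|\sim\lambda^{-n/4}\|g\|_2$ on a tube of radius $\lambda^{1/2}$ over $t\in[\lambda,2\lambda]$. The curves $j+\theta(t)$ cross this tube at relative speed $\sim\max\{1,M\}$. One finds that the maximizing times $t_j$ put about $\lambda^{\frac{n+1}{2}}$ cubes of the tube into $X_{\lambda,\eta}$ with $\eta\sim\max\{1,M\}$. Then $\|e^{it\Delta}g\|^2_{L^2(X_{\lambda,\eta})}\sim\lambda^{1/2}\|g\|_2^2$, a factor $\lambda^{1/2}$ larger than your bound allows.

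As a sanity check, your argument uses only the geometry of $X_{\lambda,\eta}$, so it would also apply to the $f_{\mathfrak k,\mathfrak l}$ in the proof of Remark~\ref{remark_reduce integration support}. There the weights $\eta$ cancel, giving the maximal estimate with exponent $\frac{1-2\alpha}{2}+\varepsilon$ for every $\alpha<\frac12$. That contradicts the necessity of $\frac{n}{2(n+1)}$ when $\alpha>\frac1{2(n+1)}$.
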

We delay the proof of the lemma for the moment. 
\begin{remark}\label{remark_reduce integration support}
	Estimate \eqref{eq_izquierda t>1} is a consequence of Lemma \ref{lemma_reduce integration support of central estimate}.
\end{remark}

\begin{proof}
	Write
	\begin{align}
		&\phantom{=} 
		\left\|
		\sup_{1\leq t<R} 			
		\left|\schtf{x+\theta(t)}\right|
		\right\|
		_{L^2\left(B_R(0),dx\right)} ^ 2
		\leq
		\sum_{\substack{j\in\Z^n\\ |j|\leq R}} \left\|    
		\sup_{1< t<R} |e^{it\Delta}f(x+\theta(t))|
		\right\|_{L^2\left(B_1(j),dx\right)} ^ 2 .
	\end{align}
		For each $j\in\Z^n$ with $|j|\leq R$, 
		we have chosen $t_j$ and $\lambda$ satisfying
	$t_j\in[\lambda,2\lambda] $ such that 
	\begin{align}
		\label{eq_lcp consequence for Qx 2nd instance}
		\sup\limits_{1<t<R}
		|\schtf{j+\theta(t)}|
		\leq 2
		\left|e^{it_j\Delta}f(j+\theta(t_j))\right| 
		,
	\end{align}
	
	By Proposition \ref{proposition_locally constant with series}, denoting  $\widehat{f_{\mathfrak{l}} }
%	(\xi)
%	= (f_{\mathfrak{k}})_\mathfrak{l}$, 
	= e^{2\pi i\mathfrak{l}\xi}\widehat f(\xi)$,
	we deduce that
	\begin{align}
		&
		\phantom{=} 
		\left\|
		\sup_{1< t<R} 			
		\left|\schtf{x+\theta(t)}\right|
		\right\|
		_{L^2\left(B_R(0),dx\right)} ^ 2
		\\
		& 
		\leq
		\sum_{\substack{j\in\Z^n\\ |j|\leq R}}
		\left(
		\sum_{\mathfrak{k}\in\Z^n} \frac{1}{(1+|\mathfrak{k}|)^{n+1}}
		%			\left\|
		\sup_{1< t<R}
		|e^{it\Delta}f_{\mathfrak{k}}(j+\theta(t))|^2
		%			\right\|_{L^2\left(B_1(j),dx\right)}
		%			Esta norma en x sobre una bola unidad desaparece tal cual al cambiar x por j, claro
		\right),
		\intertext{and, by \eqref{eq_lcp consequence for Qx 2nd instance}, this is}
		&\lesssim
		\sum_{\substack{j\in\Z^n\\ |j|\leq R}}
		\left(
		\sum_{\mathfrak{k}\in\Z^n} \frac{1}{(1+|\mathfrak{k}|)^{n+1}}
		%			\left\|
		%		\sup_{1\leq t<R}
		|e^{it_j\Delta}f_{\mathfrak{k}}(j+\theta(t_j))|^2
		\right).
		\end{align}
	By the above inequality and Proposition \ref{proposition_locally constant with series}, denoting  $\widehat{f_{\mathfrak{k},\mathfrak{l}} }
	%	(\xi)
	%	= (f_{\mathfrak{k}})_\mathfrak{l}$, 
	= e^{2\pi i\mathfrak{l}\xi}e^{2\pi i\mathfrak{k}\xi}\widehat f(\xi)$,
	we deduce that
	\begin{align}
		&\phantom{=} 
		\left\|
		\sup_{1< t<R} 			
		\left|\schtf{x+\theta(t)}\right|
		\right\|
		_{L^2\left(B_R(0),dx\right)} ^ 2
		\\
		& 
		%		\color{blue}
		\lesssim
		\sum_{\substack{j\in\Z^n\\ |j|\leq R}}
		\left(
		\sum_{\mathfrak{l},\mathfrak{k}\in\Z^n} 
		\frac{1}{[(1+\mathfrak{k})(1+|\mathfrak{l}|)]^{n+1}} \int_{t_j}^{t_j+1}\int_{B_1(j)}\left|e^{is\Delta}f_{\mathfrak{k},\mathfrak{l}}(z+\theta(t_j))\right|^2dzds
		\right)
		,
		\intertext{Change $y=z-\theta(t_j)$ in the inner integral, and it becomes}
		& 
		=
		\sum_{\substack{j\in\Z^n\\ |j|\leq R}}
		\left(
		\sum_{\mathfrak{l},\mathfrak{k}\in\Z^n} 
		\frac{1}{[(1+\mathfrak{k})(1+|\mathfrak{l}|)]^{n+1}} \int_{t_j}^{t_j+1}\int_{B_1(j +\theta(t_j))}\left|e^{is\Delta}f_{\mathfrak{k},\mathfrak{l}}(y)\right|^2 dyds
		\right).
		\intertext{By definition of $\tildeq_j$, the above sum is}
		&\lesssim
		\sum_{\substack{j\in\Z^n\\ |j|\leq R}} 
		\left(
		\sum_{\mathfrak{l},\mathfrak{k}\in\Z^n} 
		\frac{1}{[(1+\mathfrak{k})(1+|\mathfrak{l}|)]^{n+1}}
		\int_{ 
			\tildeq_j
		} 
		\left|e^{is\Delta}f_{\mathfrak{k},\mathfrak{l}}(y)\right|^2 dyds
		\right).
%		\intertext{and, by Cauchy-Schwarz, it becomes}
%		&\lesssim
%		\sum_{\substack{j\in\Z^n\\ |j|\leq R}} 
%		\sum_{\mathfrak{l},\mathfrak{k}\in\Z^n} 
%		\frac{1}{[(1+\mathfrak{k})(1+|\mathfrak{l}|)]^{n+1}}
%		\int_{ 
%			\tildeq_j
%		} 
%		\left|e^{is\Delta}f_{\mathfrak{k},\mathfrak{l}}(y)\right|^2 dyds,
		\intertext{Now, for each $i$, choose $\lambda,\eta$ such that $Q_i\subset X_{\lambda,\eta}$. Then, by Fubini's theorem, the definition of $X_{\lambda,\eta}$, and Remark \ref{remark_on the size of eta},} 
		&\lesssim 
		\sum_{\mathfrak{l},\mathfrak{k}\in\Z^n} 
		\frac{1}{[(1+\mathfrak{k})(1+|\mathfrak{l}|)]^{n+1}}
		\sum_{\substack{\lambda\sim 1}}^{R}
		\sum_{\substack{\eta\sim 1}}^{R^{1-2\alpha}\lambda^{\alpha-1}}
		\eta \int_{X_{\lambda,\eta}} |e^{is\Delta}f_{\mathfrak{k},\mathfrak{l}} (y)|^2 dyds.
	\end{align}
	Notice that the innermost sum has only the term for $\eta=1$ whenever $\lambda \geq R^{\frac{1-2\alpha}{1-\alpha}}$.
	We can apply Lemma \ref{lemma_reduce integration support of central estimate} and obtain \eqref{eq_izquierda t>1}. We are done.
\end{proof}
Let us now turn to the proof of Lemma \ref{lemma_reduce integration support of central estimate}. Note that by Theorem \ref{theorem_ball-to-band} it can be reduced to the following.
\begin{prop}\label{theorem_proposition final reduction}
	For all $\varepsilon>0,$ there exists $C_\varepsilon>0$ such that
	\begin{equation}\label{eq_remark_reduce integration support even further}
		\left\|\schtf{y}\right\|_{L^2\left(X_{\lambda,\eta}\cap(B^n_{\lambda}(k)\times[0,2\lambda]),dydt\right)}\lessssim \eta^{-\frac12} R^{s_0(n,\alpha)} \|f\|_2,
	\end{equation}
	for all 
	%	$1\leq \lambda\leq R^{\frac{1-2\alpha}{1-\alpha}}$, 
	$1\leq \lambda\leq R$,
	$1\leq\eta\lesssim R^{1-2\alpha}\lambda^{\alpha-1} $
	and $k\in \lambda\Z^n$.
\end{prop}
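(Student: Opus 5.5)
The plan is to apply Theorem \ref{theorem_Cor1.7 of Du Zhang} at scale $\lambda$ with $\beta=n$ to the set $Y:=X_{\lambda,\eta}\cap(B^n_\lambda(k)\times[0,2\lambda])$. The $\eta^{-1/2}$ gain is to come from the fact that every cube of $\mathcal F_\eta$ is hit by about $\eta$ distinct lattice points $j$.

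\textbf{Step 1 (placing $Y$ in a ball).} The set $Y$ lies in $B^n_\lambda(k)\times[\lambda,2\lambda]$, a region of diameter $\lesssim\lambda$. Replace $f$ by $e^{i\lambda\Delta}f$, which preserves $\|f\|_2$ and $\supp\widehat f$, and translate $x$ by $k$, which is harmless by translation invariance. Then $Y$ becomes a union of lattice cubes inside $B^{n+1}_{C\lambda}(0)$, and Theorem \ref{theorem_Cor1.7 of Du Zhang} with $R$ replaced by $C\lambda$ gives
\begin{equation*}
\|e^{it\Delta}f\|_{L^2(Y)}\lesssim C_\varepsilon\,\phi^{\frac1{n+1}}\lambda^{\frac{n}{2(n+1)}+\varepsilon}\|f\|_2 .
\end{equation*}
Since $\lambda\le R$, the factor $\lambda^{\varepsilon}$ is absorbed into $R^{\varepsilon}$.

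\textbf{Step 2 (density bound, the main step).} Fix a ball $B^{n+1}_r(x',t')$ with $1\le r\lesssim\lambda$, and count the cubes $\tildeq\in\mathcal F_\eta$ of $Y$ contained in it. Each such cube equals $\tildeq_j$ for about $\eta$ values of $j$, so
\begin{equation*}
\#\{\text{cubes}\}\lesssim\eta^{-1}\,\#\{j:\tildeq_j\subset B_r(x',t')\}.
\end{equation*}
If $\tildeq_j$ lies in the ball, then $|t_j-t'|\le r$ and $j$ lies within distance $r$ of $x'-\theta(t_j)$. Hence $j$ lies in the $r$-neighbourhood of the curve piece $\{x'-\theta(t):|t-t'|\le r,\ t\in[\lambda,2\lambda]\}$. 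Arguing as in the proof of Remark \ref{remark_on the size of eta}, this curve piece has length $\lesssim rL$, where $L:=R^{1-2\alpha}\lambda^{\alpha-1}$. Its $r$-neighbourhood therefore has volume $\lesssim r^{n-1}\max(r,rL)=r^n\max(1,L)$, which gives
\begin{equation*}
\phi=\phi_{Y,n,\lambda}\lesssim\eta^{-1}\max(1,L).
\end{equation*}
Getting this count right, with uniformity in the centre $(x',t')$ and with the monotone bound $|\theta'(s)|\lesssim R^{1-2\alpha}\lambda^{\alpha-1}$ for $s\ge\lambda$, is where I expect the real work to lie.

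\textbf{Step 3 (bookkeeping).} The argument splits according to the size of $L$.
\begin{itemize}
\item If $L\lesssim1$, then $\eta\sim1$ by Remark \ref{remark_on the size of eta}. The bound is $\lambda^{\frac n{2(n+1)}}\le R^{\frac n{2(n+1)}}\le R^{s_0}$.
\item If $L\ge1$, use $\eta\lesssim L$ to write $\eta^{-\frac1{n+1}}\le\eta^{-\frac12}L^{\frac{n-1}{2(n+1)}}$. The bound then becomes
\begin{equation*}
\eta^{-\frac12}L^{\frac12}\lambda^{\frac{n}{2(n+1)}}=\eta^{-\frac12}R^{\frac{1-2\alpha}2}\lambda^{\frac\alpha2-\frac1{2(n+1)}}.
\end{equation*}
\end{itemize}
In the second case the exponent of $\lambda$ decides the outcome.
\begin{itemize}
\item If $\alpha\le\frac1{n+1}$, the exponent is nonpositive, so the bound is $\le\eta^{-1/2}R^{\frac{1-2\alpha}2}\le\eta^{-1/2}R^{s_0}$.
\item Otherwise use $\lambda\le R$ to get $\eta^{-1/2}R^{\frac n{2(n+1)}-\frac\alpha2}\le\eta^{-1/2}R^{s_0}$.
\end{itemize}
Both cases yield \eqref{eq_remark_reduce integration support even further}. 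Theorem \ref{theorem_ball-to-band} then upgrades this to Lemma \ref{lemma_reduce integration support of central estimate}.
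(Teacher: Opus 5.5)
Your proposal is correct and follows essentially the same route as the paper. It applies Corollary 1.7 of Du--Zhang at scale $\lambda$ with $\beta=n$, obtains the density bound $\phi\lesssim\eta^{-1}\max\{1,R^{1-2\alpha}\lambda^{\alpha-1}\}$ by counting lattice points in a tubular neighbourhood of a curve piece of length $\lesssim rR^{1-2\alpha}\lambda^{\alpha-1}$ (this is the paper's lemma on the density $\phi$), and uses the same case split according to whether $R^{1-2\alpha}\lambda^{\alpha-1}\lesssim 1$ and whether $\alpha\le\frac{1}{n+1}$.

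The differences are only cosmetic, and all are harmless:
\begin{itemize}
\item You count the density directly at scale $\lambda$, instead of going through the paper's monotonicity-in-scale and translation properties of $\phi$.
\item Your time shift $f\mapsto e^{i\lambda\Delta}f$ places the set inside a ball of radius $\sim\lambda$. This is more careful than the paper's literal use of $B^{n+1}_\lambda(k,0)$, which does not meet the time range $[\lambda,2\lambda]$.
\item In the last subcase you use $\lambda\le R$, which gives $R^{\frac{n}{2(n+1)}-\frac{\alpha}{2}}$. The paper uses $\lambda\le R^{\frac{1-2\alpha}{1-\alpha}}$ instead, and either bound suffices.
\end{itemize}
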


\subsection{Proof of the last reduction: Proposition \ref{theorem_proposition final reduction}}
In this section, we want to use Theorem \ref{theorem_Cor1.7 of Du Zhang} to prove Proposition \ref{theorem_proposition final reduction}. 
%By Remark \ref{remark_reduce integration support even further} and Remark \ref{remark_reduce integration support} this would prove \eqref{eq_izquierda t>1}.
%and \eqref{eq_remark_reduce integration support even further - right estimate}.
% The following lemma bounds the density factor arising from said theorem.
Before that, we need to prove three facts related to the density factor $\phi$ that arises from Theorem \ref{theorem_Cor1.7 of Du Zhang}. 

The density factor $\phi$ always has the following three properties. First, fix $1\leq \lambda\leq R$ and observe that
%$1\leq \lambda\leq R$
%choice 
for any
$X'\subset B_R^{n+1}(0)$, a union of lattice unit cubes in $B_R^{n+1}(0)$, we have that
\begin{equation}\label{eq_observation on densities involving tau_0 vs R}
	\phi_{X',n,\lambda}\leq \phi_{X',n,R},
\end{equation}
by definition \eqref{equation_definition of beta density of X}. 

Second, we want to remark that translations of the set $X'$ do not change the estimate. That is, for any choice of $X'\subset B_R^{n+1}(0)$ and any $k\in\Z^n\cap B^n_R(0)$, 
\begin{equation}\label{eq_observation on densities involving translation by (k,0)}
	\phi_{X'-(k,0),n,R} \leq \phi_{X',n,2R}.
\end{equation}
This can be shown as follows. Fix $(y,\tau)\in B^{n+1}_R(0)$. Then
\begin{align}
	&\# \left\{\tildeq \subset X' -(k,0) : \tildeq \subset B_r^{n+1}(y,\tau)\right\}
	\\
	=
	&\# \left\{\tildeq \subset X' : \tildeq \subset B_r^{n+1}(y+k,\tau)\right\}.
	\intertext{However, calling $y+k=y'$, we have $(y',\tau)\in B_{2R}^{n+1}(0)$. Thus the above is}
	&\# \left\{\tildeq \subset X' : \tildeq \subset B_r^{n+1}(y',\tau)\right\}.
\end{align}
Taking a maximum as in \eqref{equation_definition of beta density of X} completes the proof of \eqref{eq_observation on densities involving translation by (k,0)}.

The third property is the Lemma below.

\begin{lemma}\label{lemma_key bound of the phi density}
	For all $1\leq \lambda\leq R$, all $\eta\geq 1$
%	all $1\leq\eta\lesssim R^{1-2\alpha}\lambda^{\alpha-1} $, 
	and all $f\in L^2(\R^d)$, 
	we have that, for $X_{\lambda,\eta}$ defined as above,
	\begin{equation}\label{eq_density phi bound for X-t0-eta - lemma copy}
		\phi_{X_{\lambda,\eta},n,R} \lesssim \eta^{-1} \max\{1,R^{1-2\alpha}\lambda^{\alpha-1}\}.
	\end{equation}
\end{lemma}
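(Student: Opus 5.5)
We have to bound, for every ball $B_r^{n+1}(x',t')\subset B_R^{n+1}(0)$ with $r\geq 1$, the number of lattice cubes of $X_{\lambda,\eta}$ inside the ball by $r^n\,\eta^{-1}\max\{1,R^{1-2\alpha}\lambda^{\alpha-1}\}$. The plan is a double counting. Each cube $\tildeq\subset X_{\lambda,\eta}$ belongs to $\mathcal F_\eta$, so it equals $\tildeq_y$ for about $\eta$ distinct lattice points $y\in B_R^n(0)\cap\Z^n$. Since the map $y\mapsto \tildeq_y$ is a function, the fibres over distinct cubes are disjoint. Hence
\begin{equation*}
\#\{\tildeq\subset X_{\lambda,\eta}:\tildeq\subset B_r^{n+1}(x',t')\}\lesssim \eta^{-1}\,\#\{y\in\Z^n: \tildeq_y\subset B_r^{n+1}(x',t')\cap(\R^n\times[\lambda,2\lambda])\}.
\end{equation*}
The task therefore reduces to counting the lattice points $y$ whose chosen point $(y+\theta(t_y),t_y)$ lies within distance about $c_n$ of the ball of radius $r$. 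Here $t_y\in[\lambda,2\lambda]$ and $|t_y-t'|\leq r$.

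To count these $y$, we use the same reasoning as in the proof of Remark \ref{remark_on the size of eta}. The condition forces
\begin{equation*}
y\in B^n_{r+c_n}(x')-\theta(t_y)\subset \bigcup_{t\in I} \bigl(B^n_{r+c_n}(x')-\theta(t)\bigr),
\end{equation*}
where $I=[\lambda,2\lambda]\cap[t'-r-c_n,t'+r+c_n]$. This set is the $(r+c_n)$-neighbourhood of the curve piece $x'-\theta(I)$. Its volume, and hence its lattice point count, is
\begin{equation*}
\lesssim r^n+r^{n-1}\len(\theta(I)).
\end{equation*}
By the derivative estimate \eqref{eq_bound length of curve}, $|\theta'(s)|\lesssim R^{1-2\alpha}\lambda^{\alpha-1}$ on $[\lambda,2\lambda]$. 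Since $|I|\lesssim r$, this gives $\len(\theta(I))\lesssim r\,R^{1-2\alpha}\lambda^{\alpha-1}$. The count is then $\lesssim r^n\max\{1,R^{1-2\alpha}\lambda^{\alpha-1}\}$. Dividing by $r^n$ and by $\eta$, and then taking the maximum over balls, gives \eqref{eq_density phi bound for X-t0-eta - lemma copy}.

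The main obstacle is the volume bound for the neighbourhood of a curve. One has to justify that the $\rho$-neighbourhood of a rectifiable curve of length $L$ has volume $\lesssim \rho^n+\rho^{n-1}L$, with $\rho=r+c_n\sim r$. To do this, cover the curve by $\lesssim 1+L/\rho$ balls of radius $\rho$ centred at points spaced $\rho$ apart in arclength, and enlarge each to radius $2\rho$. Passing from volume to lattice point count costs only a constant, because $\rho\geq 1$.

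A second point to check is the case $\eta$ large. There $\mathcal F_\eta$ may contain no cubes in the slab, and the bound is trivial. The fibre-disjointness argument also needs no restriction on $\eta$, which is consistent with the lemma holding for all $\eta\geq 1$.
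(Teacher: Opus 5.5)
Your proof is correct and follows the paper's argument. You divide by $\eta$ using the disjoint fibres of $y\mapsto\tildeq_y$, place the relevant lattice points in an $r$-neighbourhood of a piece of $\theta$, bound that neighbourhood's volume by $\max\{r^n,r^{n-1}\len\}$, and bound the length via $|\theta'(s)|\lesssim R^{1-2\alpha}s^{\alpha-1}$. The only difference is that the paper first reduces to $r\leq\lambda$ by covering the ball with about $(r/\lambda)^n$ balls of radius $\lambda$, while you intersect the time window with $[\lambda,2\lambda]$ directly, which handles all $r\geq 1$ at once and keeps the derivative bound uniform on the slab.
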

%We prove this lemma in Section \ref{section_estimate density phi for our choice of theta(t)}.
\begin{proof}

	Fix $\lambda,\eta$ and $f$.
%	 Construct the associated set $X_{\lambda,\eta}$.
%	%We can now calculate the density of cubes that form $X_{\lambda,\eta}$. 
%	We want to estimate $\phi_{X_{\lambda,\eta},n,R}$. 
	We are going to bound the amount of unit cubes within $X_{\lambda,\eta}$ that can fit inside an arbitrary ball of radius $r\leq R$. 
	Notice that, in order to estimate \eqref{eq_density phi bound for X-t0-eta - lemma copy}, it is enough to consider $r\leq \lambda$. Indeed, for $\lambda \leq r,$ covering $B_r^{n+1}(x',y')\cap X_{\lambda,\eta}$ with about $\left(\frac r\lambda\right)^n$ balls of radius $\lambda$, we see that
	\begin{equation}
		\#\left\{ Q_k\subset X_{\lambda,\eta}: Q_k \subset B_r^{n+1}(x',t')\right\} 
		\leq \left(\frac r\lambda \right)^n \sup\limits_{\substack{
%				B_\lambda^{n+1}(y,\tau)
				y\in \R^n
				\\ \lambda\leq \tau\leq 2\lambda}}
			\#\left\{ Q_k\subset X_{\lambda,\eta}\cap B_\lambda^{n+1}(y,\tau)\right\}.
	\end{equation}
	% Ah ya entiendo. El lado derecho es completar la banda con bolas de máxima densidad en la escala \lambda. Con razón eso tiene que tener más cuadrados (o los mismos si la densidad es alta en toda la banda) que tiene el lado izquierdo.

	Take  $(y,\tau)$ where $\lambda\leq\tau\leq 2\lambda$ and $0< r \leq \lambda$ such that $B_r(y,\tau)\subset B_{4\lambda}^{n+1}(0)$.
%	By the definition of $X_{\lambda,\eta}$, we can assume that $\tau \simeq \lambda$. Thus,
	Then
%	of $X_{\lambda,\eta},$ 
	\begin{align}
		\# \left\{\tildeq \subset X_{\lambda,\eta} : \tildeq \subset B_r^{n+1}(y,\tau) \right\} 
		\leq
		&\eta^{-1} \#\left\{ j\in\Z^n: \tildeq_j \subset B_r^{n+1}(y,\tau)\cap\left(\R^n\times[\lambda,2\lambda]\right) \right\}.
		\label{eq_intermediate step number3 in counting}
	\end{align}
	The set above is a subset of
	\begin{equation}
		\left\{j\in\Z^n : (j+\theta(t),t)\in B^{n+1}_{r+1}\left(y,\tau \right) \text{, for some } t\in[0,R] \right\}.
	\end{equation}
	The cardinal of this set is bounded by the $n$-volume of 
	\begin{align}
		\phantom{=} &\, \left\{ x\in \R^n: (x+\theta(t),t) \in B_{r+1}^{n+1}(y,\tau) \text{, for some } t \in[0,R]\right\}
		\\
		= & \, \left\{x\in \R^n : (x+\theta(t),t) \in B_{r+1}^n(y)\times[\tau-r,\tau+r] \text{, for some } t\in[0,R] \right\}
		\\
		\subset&\, B_{r+1}^n(y)-\theta[\tau-r,\tau+r].
%		\verluegoalphados{=:A.}
	\end{align}
	Whenever $\len(\theta[\tau-r,\tau+r])<r$, the above set has a volume comparable to $r^n$. Otherwise, the above set is a tubular $r$-neighborhood of $\theta[\tau-r,\tau+r]$. Therefore, the $n$-volume of the aforementioned set is, up to dimensional constants, lesser than or equal to $ r^{n-1}\len\left(\theta[\tau-r,\tau+r]\right)$. From these facts and \eqref{eq_intermediate step number3 in counting}, we have that
	\begin{align}\label{eq_bound of tubular r-neighbourhood}
		\# \left\{\tildeq \subset X_{\lambda,\eta} : \tildeq \subset B_r^{n+1}(y,\tau) \right\} 
		\lesssim
		\eta^{-1}\max\left\{r^n,r^{n-1}\len\left(\theta[\tau-r,\tau+r]\right)\right\}
	\end{align}
%	\subsubsection{Calculating the length of $\theta$}
	%The calculation for two variables is analogous to the one for $n$ variables. We offer the simpler calculation here. That is, we take $\theta(t) = (R^{1-2\alpha}t^\alpha,R^{1-2\alpha'}t^{\alpha'})$. Without loss of generality, we can assume 
	%$\alpha \leq \alpha'.$
	
	Let us calculate the value of the length appearing above. Remember we are denoting $\alpha =\min_{\substack{1\leq j \leq n}}\{\alpha_j\}.$ By definition \eqref{eq_definition of theta} of our curve, we have that
	\begin{align}
		\len\left(\theta[\tau-r,\tau+r]\right) 
		&= \int_{\tau-r}^{\tau+r}|\theta'(s)|ds
		\label{eq_len of Gamma first step}
		\simeq
		\int_{\tau-r}^{\tau+r} 
		R^{1-2\alpha}s^{\alpha-1}  ds
		\lesssim
		2rR^{1-2\alpha}\tau^{\alpha-1}
	\end{align}
	Recall
%	\eqref{eq_intermediate step number1 in counting}, 
	\eqref{eq_bound of tubular r-neighbourhood} and use the above to deduce that
\begin{align}
	\# \left\{\tildeq \subset X_{\lambda,\eta} : \tildeq \subset B_r^{n+1}(y,\tau) \right\} 
	\lesssim
	\eta^{-1}r^n\max\left\{ 1, R^{1-2\alpha}\tau^{\alpha-1}\right\}.
	\label{eq_bound the A-set within Rn v2}
\end{align}
%Notice that this bound does not depend on $(y,\tau)$. 
Therefore, by definition \eqref{equation_definition of beta density of X}, since $\tau\simeq \lambda$,
%of $\phi_{X_{\lambda,\eta},n,R} $,
\begin{equation}\label{eq_density phi bound for X-t0-eta}
	\phi_{X_{\lambda,\eta},n,R} 
%	\frac{\# \left\{\tildeq \subset X_{\lambda,\eta} : \tildeq \subset B_r^{n+1}(y,\tau) \right\} }{r^n}
	\lesssim 
	\eta^{-1} \max\{1,R^{1-2\alpha}\lambda^{\alpha-1}\},
\end{equation}
and we are done.
\end{proof}

We now turn to the proof of Proposition \ref{theorem_proposition final reduction}
\begin{proof}
	
Fix 
%$ 1\leq \lambda \leq R^{\frac{1-2\alpha}{1-\alpha}}$ 
$ 1\leq \lambda \leq R$ 
and $1\leq \eta \lesssim R^{1-2\alpha} \lambda^{\alpha-1}$. 
Fix $k\in \Z^n\cap B^{n+1}_R(0)$. Remember that $X_{\lambda,\eta}\subset B_R^{n+1}(0)$ by definition.
Apply \Cref{theorem_Cor1.7 of Du Zhang}, at scale $\lambda$ 
instead of $R$, 
to bound 
\begin{align}
	&\phantom{leq}\eta^{\frac12}\left\|e^{it\Delta}f\right\|
	_{L^2
			\left(X_{\lambda,\eta}\cap B^{n+1}_{\lambda}(k,0)
			%		\cap B^{n+1}_R(0)
			\right)}
	\\&= 
	\eta^{\frac12}\left\|e^{it\verluegocinco{-0}\Delta}[f(\cdot-k)]\right\|
	_{L^2
			\left((X_{\lambda,\eta}-(k,0))\cap B^{n+1}_{\lambda}(0)
			%		\cap B^{n+1}_R(0)
			\right)}
	\\
	&\leq
	C_\varepsilon \lambda^\varepsilon \eta^{\frac12} \left(\phi_{\left(X_{\lambda,\eta}-(k,0)\right)\cap B^{n+1}_{\lambda}(0),n,\lambda}\right)^{\frac 1{n+1}} 
	\lambda^{\frac{n}{2(n+1)}} \|f(\cdot-k)\|_2.
	\intertext{By 
			\eqref{eq_observation on densities involving tau_0 vs R} and \eqref{eq_observation on densities involving translation by (k,0)},
			this is } 
	&\leq 
	C_\varepsilon \lambda^\varepsilon \eta^{\frac12} (\phi_{X_{\lambda,\eta},n,2R})^{\frac 1{n+1}}\lambda^{\frac{n}{2(n+1)}}\|f\|_2.
	\intertext{By Lemma \ref{lemma_key bound of the phi density} the above is}
	&\lesssim 
	C_\varepsilon \lambda^\varepsilon \eta^{\frac12}
	%	(\phi_{X_{\lambda,\eta},n,2R})^{\frac 1{n+1}}
	(\eta^{-1} \max\{1,R^{1-2\alpha}\lambda^{\alpha-1}\})^{\frac 1{n+1}}
	\lambda^{\frac{n}{2(n+1)}}\|f\|_2
\label{eq_step before the split of cases of lambda}
	\intertext{
	Observe that, whenever $R^{\frac{1-2\alpha}{1-\alpha}}\leq \lambda \leq R$, we have $\eta\lesssim R^{1-2\alpha}\lambda^{\alpha-1} \leq 1$.
	}
%	\\
	&
	=
	C_\varepsilon \lambda^\varepsilon
	\eta^{\frac12- \frac{1}{n+1}}
    \lambda^{\frac{n}{2(n+1)}}\|f\|_2
	\\
	&
	\lesssim C_\varepsilon R^\varepsilon R^{\frac n{2(n+1)}}\|f\|_2,
\end{align}
	and \eqref{eq_remark_reduce integration support even further} is established in this case.

	Otherwise, if $1\leq\lambda\leq R^{\frac{1-2\alpha}{1-\alpha}}$,
	by \eqref{eq_step before the split of cases of lambda},
\begin{align}
	&\phantom{leq}\eta^{\frac12}\left\|e^{it\Delta}f\right\|
	_{L^2
		\left(X_{\lambda,\eta}\cap B^{n+1}_{\lambda}(k,0)
		%		\cap B^{n+1}_R(0)
		\right)}
	\\
	&\lessssim 
	\eta^{\frac12- \frac{1}{n+1}}\left(R^{1-2\alpha} \lambda^{\alpha-1}\right)^{\frac1{n+1}} \lambda^{\frac{n}{2(n+1)}}\|f\|_2.
	\intertext{Since $\eta\lesssim R^{1-2\alpha}\lambda^{\alpha-1},$ this is}
	&\leq
	C_\varepsilon R^\varepsilon
	\left(R^{1-2\alpha}\lambda^{\alpha-1}\right)^{\frac12-\frac1{n+1}}
	\left(R^{1-2\alpha}\lambda^{\alpha-1}\right)^{\frac1{n+1}} \lambda^{\frac{n}{2(n+1)}}\|f\|_2
	\\
	&= 
	C_\varepsilon R^\varepsilon
	R^{\frac{1-2\alpha}{2}} \lambda^{\frac\alpha2-\frac 1{2(n+1)}}\|f\|_2 .
\end{align}
Denote the constant above by $C_{R,\lambda}=R^{\frac{1-2\alpha}{2}} \lambda^{\frac\alpha2-\frac 1{2(n+1)}}$. We distinguish two cases. 

In the first case, if $\alpha<\frac1{n+1}$, then $\frac\alpha2 - \frac1{2(n+1)} < 0,$ thus $C_{R,\lambda} \leq R^{\frac{1-2\alpha}{2}}\leq R^{s_0(n,\alpha)}$ and we are done.

In the second case, if $\alpha\geq\frac1{n+1}$, since $\lambda\leq R^{\frac{1-2\alpha}{1-\alpha}}$, 
\begin{align}
	C_{R,\lambda} 
	&\leq R^{\frac{1-2\alpha}{2}}R^{\frac{1-2\alpha}{1-\alpha}\left(\frac\alpha2-\frac1{2(n+1)}\right)}
	\\
	&=
	R^{\frac{1-2\alpha}{1-\alpha}\left(\frac{1-\alpha}2 + \frac\alpha2 - \frac1{2(n+1)}\right)}
	\\
	&=
	R^{\frac{1-2\alpha}{1-\alpha}\cdot\frac{n}{2(n+1)}}
	\leq R^{s_0(n,\alpha)}.
\end{align}
%By Remark \ref{remark_reduce integration support even further}, we have proven estimate \eqref{eq_central-constante combinada}.
This gives \eqref{eq_remark_reduce integration support even further} for $ 1\leq \lambda \leq R^{\frac{1-2\alpha}{1-\alpha}}$. 
\end{proof}

%\section{Bounding the Schrödinger evolution over unbounded space.}\label{section_proof of theorem ball to band}
%\input{Cuts/Section 4 - Th2.4 proof}

\pagebreak

	%%%%%%%%%%%%%% BIBLIOGRAPHY %%%%%%%%%%%%%
	\nocite{*}
	\medskip
	
	\printbibliography
	% Comment this after use.!!!
	%\input{Th1.3.M-main.bbl} % Comment this after use.!!!
	% Comment this after use.!!!
	
	\footnotesize
	\sc JAVIER MINGUILLÓN, DEPARTMENT OF MATHEMATICS, UNIVERSIDAD AUTÓNOMA DE MADRID, 28049 MADRID, SPAIN
	
	\normalfont
	\textit{E-mail address:} javier.minguillon@uam.es
	
	\sc FERNANDO SORIA, DEPARTMENT OF MATHEMATICS, UNIVERSIDAD AUTÓNOMA DE MADRID, 28049 MADRID, SPAIN
	
	\normalfont
	\textit{E-mail address:} fernando.soria@uam.es
	
	\sc ANA VARGAS, DEPARTMENT OF MATHEMATICS, UNIVERSIDAD AUTÓNOMA DE MADRID, 28049 MADRID, SPAIN
	
	\normalfont
	
	\textit{E-mail address:} ana.vargas@uam.es
\end{document}